\newtheorem{theorem}{Theorem}[section]
\newtheorem{lem}[theorem]{Lemma}
\newtheorem{kor}[theorem]{Corollary}
\newtheorem{prop}[theorem]{Proposition}
\newtheorem{rem}[theorem]{Remark}
\theoremstyle{definition}
\newtheorem{dfn}[theorem]{Definition}
\numberwithin{equation}{section}
\DeclareMathOperator*{\supp}{supp}
\DeclareMathOperator*{\divv}{div}
\DeclareMathOperator*{\Id}{Id}
\newcommand*{\Bscr}{\mathcal B}
\newcommand*{\Cscr}{\mathcal C}
\newcommand*{\Fscr}{\mathcal F}
\newcommand*{\Mscr}{\mathcal M}
\newcommand*{\Pscr}{\mathcal P}
\newcommand*{\N}{\mathbb{N}}
\newcommand*{\R}{\mathbb{R}}
\newcommand{\ddt}{\frac{d}{dt}}
\newcommand{\ddtau}{\frac{d}{d\tau}}
\definecolor{orange}{rgb}{1.0, 0.55, 0.0} 
\newcommand{\footremember}[2]{%
	\footnote{#2}
	\newcounter{#1}
	\setcounter{#1}{\value{footnote}}%
}
\title{Nonlinear Fokker--Planck--Kolmogorov equations as gradient flows on the space of probability measures}
\author{%
	Marco Rehmeier (corresponding author)\footremember{alley}{Faculty of Mathematics, Bielefeld University, 33615 Bielefeld, Germany. E-mail: mrehmeier@math.uni-bielefeld.de} \footnote{Scuola Normale Superiore Pisa, Italy}
	\and Michael Röckner\footnote{Faculty of Mathematics, Bielefeld University, 33615 Bielefeld, Germany. E-mail: roeckner@math.uni-bielefeld.de} \footnote{Academy of Mathematics and System Sciences, CAS, Beijing}%
}
\date{}
\begin{document}
	\maketitle

	\begin{abstract}
	We propose a general method to identify nonlinear Fokker--Planck--Kolmogorov equations (FPK equations) as gradient flows on the space of probability measures on $\R^d$ with a natural differential geometry. Our notion of gradient flow does not depend on any underlying metric structure such as the Wasserstein distance, but is derived from purely differential geometric considerations. We explicitly identify the associated entropy functions, and also the corresponding energy functionals, in particular their domains of definition. Furthermore, the latter functions are Lyapunov functions for the solutions of the FPK equations. Moreover, we show uniqueness results for such gradient flows, and we also prove that the gradient of $E$ is a gradient field on $\R^d$, which can be approximated by smooth gradient fields. These results cover classical and generalized porous media equations, where the latter have a generalized diffusivity function and a nonlinear transport-type first-order perturbation.
	\end{abstract}
	\noindent	\textbf{Keywords:} Gradient flow, nonlinear Fokker--Planck equation, generalized porous media equation, differential geometry, Barenblatt solution, direct integral of Hilbert spaces.\\
	\textbf{2020 MSC:} 35Q84, 35K55, 76S05, 58B20, 37B35, 35B40. 
	
	\paragraph{Data availability statement:} The authors declare that no relevant data has been created or used for this work. 
	\section{Introduction}
	In this paper we propose a simple general approach to identify solutions to \emph{nonlinear} Fokker--Planck--Kolmogorov equations (FPK equations) of type
	\begin{equation}\label{intro:NLFPKE}\tag{FPK}
		\partial_t \mu_t = \partial_{ij}(a_{ij}(t,\mu_t,x)\mu_t)-\partial_i(b_i(t,\mu_t,x)\mu_t)
	\end{equation}
(where $a_{ij},b_i: \R_+\times \Pscr\times\R^d\to \R$), with measure valued solution paths,
	as solutions to gradient flows on the space $\Pscr$ of Borel probability measures on $\R^d$, $d \geq 1$, i.e. as solutions to equations of type
	\begin{equation}\label{intro:GF}\tag{GF}
		\ddt \mu_t = -\nabla^\Pscr E_{\mu_t}.
	\end{equation} Furthermore, we explicitly identify the associated energy functions $E$, in particular their domains of definition, as Lyapunov functions for these solutions, which can, e.g., be used to prove existence of stationary solutions to the FPK equation (see \cite{BR-IndianaJ}). Our approach does not involve any metric on $\Pscr$, but only a general construction of a natural tangent bundle on $\Pscr$ and corresponding gradient $\nabla^\Pscr$ (as was done in \cite{AKR1,AKR3,Roeckner1998} for configuration spaces replacing $\Pscr$). Therefore, our method is completely different from the well-known approach developed in the theory of optimal transport, e.g. via the theory of gradient flows in metric spaces applied to the Wasserstein space $\Pscr_2$ of Borel probability measures on $\R^d$ with finite second moment, in which case the induced metric, the Wasserstein metric, plays a major role.
The latter approach goes back to the pioneering work by Otto \cite{Otto01} and Jordan, Kinderlehrer, Otto \cite{JKO98}. For very nice presentations of this and related material, see e.g. the books of Ambrosio, Gigli, Savaré \cite{AGS05} and Villani \cite{Villani08-book}, as well as the lecture notes by Figalli and Glaudo \cite{FG21}. 
\\

Although our method is expected to be applicable to more general classes of equations, in this work we focus on \eqref{intro:NLFPKE} with so-called Nemytskii-type coefficients, which depend on the density $u$ (with respect to Lebesgue measure $dx$) of $\mu(dx) = u(x)dx \in \Pscr$ pointwise via $a_{ij}(\mu,x) = \tilde{a}_{ij}(u(x),x)$ and $b_i(\mu,x) = \tilde{b}_i(u(x),x)$ for measurable $\tilde{a}_{ij},\tilde{b}_i:\R\times \R^d \to \R$. More precisely, our main results are on generalized porous media equations (PME) of type
\begin{equation}\label{intro:genPME}\tag{gPME}
	\partial_t u = \Delta \beta (u) - \divv(Db(u)u),
\end{equation}
which are nonlinear FPK equations reformulated as PDEs for the densities $u(t)$ of solutions $t \mapsto \mu_t$. Here, \textit{generalized} PME does not only refer to the generalized diffusivity $\beta: \R \to \R$ (where $\beta(r) = |r|^{m-1}r$ gives the classical PME), but also to the additional nonlinear transport-type first order perturbation, composed of a vector field $D: \R^d\to \R^d$ and the nonlinearity $b: \R \to \R$. The papers \cite{NLFPK-DDSDE5}-\cite{BR23} further developed the existence- and uniqueness theory from the pioneering results in \cite{BBC75,BC79,Pierre82,BF83} on these type of equations, and, in particular, were devoted to the relation with McKean--Vlasov type stochastic differential equations. Though solutions to these equations have densities with respect to $dx$ for $t >0$, under suitable assumptions on $\beta$ their initial data may be arbitrary probability measures. To the best of our knowledge, the present paper is the first in which equations of this general type are identified as gradient flows on a space of measures.
\\

The differential geometry on $\Pscr$ used in this paper was first introduced in \cite{RRW20}. But the method how to identify natural tangent bundles over "manifold-like" state spaces $\Mscr$ has been known much longer and goes back to \cite{AKR1,AKR3,Roeckner1998}. It goes in two steps. The first is to fix a large enough (i.e., at least point separating) class $\Fscr$ of "test functions" $F:\mathcal{M}\to \R$. The second is to fix for each $x \in \mathcal{M}$ a set $\Cscr$ of "suitable" curves $\gamma^x:(-\varepsilon,\varepsilon)\to \mathcal{M}$ such that $\gamma^x(0)=x$, along which one can differentiate $t\mapsto F(\gamma^x(t))$ at $t=0$ for each $F\in \Fscr$, which gives derivations at $x$ as linear maps on $\Fscr$, i.e. a "reduced tangent space" at $x$. Here "reduced" indicates the fact that if $\mathcal{M}$ is a Riemannian manifold, then depending on $\Fscr \subseteq C^1(\Mscr)$ and $\Cscr$, one obtains smaller tangent spaces than the usual $T_x\Mscr$. Finding $\Fscr$ and, in particular, $\Cscr$, in the case that $\mathcal{M}$ is a space of measures on a Riemannian manifold $M$, is possible due to the general idea of "lifting" the geometry from $M$ to $\mathcal{M}$, which goes as follows: For a smooth, compactly supported section $\varphi$ in the tangent bundle $TM$ of $M$ (we write $\varphi \in C^\infty_c(M,TM)$), let $\Phi^\varphi$ be the flow of $\varphi$ on $M$, i.e. 
$$\Phi^\varphi(0,x) = x,\quad \ddt \Phi^\varphi(t,x) = \varphi(\Phi^\varphi(t,x)),\quad \forall (t,x) \in \R\times M.$$
 For $\nu \in \mathcal{M}$, let $\tilde{\mu}^{\varphi,\nu}_t := \nu\circ \Phi^\varphi (t)^{-1}$. The curves $t\mapsto \tilde{\mu}^{\varphi,\nu}_t$, $\varphi \in C_c^\infty(M,TM)$, form such a set $\Cscr$ of suitable curves. Then let $\Fscr$ be all functions of the form $F: \mathcal{M}\to \R$, $F(\nu)=f(\nu(h_1),\dots,\nu(h_k))$, $h_i \in C^\infty_c(M,\R)$, $f \in C^1_b(\R^k)$, $k \in \N$. One sets 
 $$\nabla_\varphi F(\nu):= \ddt F(\tilde{\mu}^{\varphi,\nu}_t)_{|t=0}$$
  and obtains by the chain rule
\begin{equation*}
	\nabla_\varphi F(\nu) = \int_M\bigg \langle \sum_{i=1}^k\partial_if(\nu(h_1),\dots,\nu(h_k))\nabla^M h_i, \varphi\bigg\rangle_{T_xM} \, d\nu(x).
\end{equation*}
Hence one defines 
$$T_\nu \mathcal{M} := \overline{C^\infty_c(M,TM)}^{L^2(M,TM;\nu)} = L^2(M,TM);\nu),\,\,\nu \in \Mscr,$$
 with inner product 
 $$\langle \varphi, \bar{\varphi} \rangle_{T_\nu \mathcal{M}}:= \int_M \langle \varphi(x),\bar{\varphi}(x)\rangle_{T_xM}\,d\nu(x).$$
 Then the gradient $\nabla^\mathcal{M}F(\nu) \in T_\nu\Mscr$ at $\nu \in \Mscr$ is uniquely characterized by
\begin{equation}
	\nabla_\varphi F(\nu) = \langle \nabla^\mathcal{M}F(\nu), \varphi\rangle_{T_\nu \mathcal{M}},\quad \forall \varphi \in T_\nu \Mscr,
\end{equation}
i.e. for $F$ as above $\nabla^\mathcal{M}F(\nu) = \sum_{i=1}^k\partial_if(\nu(h_1),\dots,\nu(h_k)) \nabla^M h_i$. The first implementation of this scheme was done in \cite{AKR1,AKR3,Roeckner1998} to deduce a natural geometry on the configuration space $\Gamma(M)$ of a Riemannian manifold $M$, i.e. on the space of $\mathbb{Z}_+\cup \{\infty\}$-valued Radon measures on $M$.

For the case $M = \R^d$, $\mathcal{M} = \Pscr$, which we consider in the present paper following \cite{RRW20}, we may replace $\tilde{\mu}^{\varphi,\nu}_t$ by 
$$\mu^{\varphi,\nu}_t := \nu\circ (\Id+t\varphi)^{-1},\quad \forall \varphi \in C^\infty_c(\R^d,\R^d), \,\nu \in \Pscr $$
since clearly ${\ddt \tilde{\mu}^{\varphi,\nu}_t}_{|t=0}  = {\ddt \mu^{\varphi,\nu}_t}_{|t=0}$. The definition of $\mu^{\varphi,\nu}$ can be extended to every $\varphi\in L^2(\R^d,\R^d;\nu)$, whereby we arrive at our tangent spaces $T_\nu \Pscr = L^2(\R^d,\R^d;\nu)$, $\nu\in \Pscr$. Depending on the FPK equation \eqref{intro:NLFPKE} under consideration, we consider $T_\nu\Pscr$ with either the standard $L^2$-inner product $\langle \cdot, \cdot \rangle_\nu$ or, unlike \cite{RRW20}, with a weighted inner product with weight at $\nu(dx)= v(x)dx$ given by $\frac{1}{b(v)}$, see \eqref{def:weighted-tensor1}-\eqref{def:weighted-tensor2} below. We refer to Section \ref{subsect:diff-geom} and the appendix for more details on our geometry. That the rather small class $\{t\mapsto \mu^{\varphi,\nu}_t, \varphi \in L^2(\R^d,\R^d;\nu), \nu \in \Pscr\}$ of curves is sufficiently large and indeed the "right" class to give a gradient flow representation for all nonlinear FPK equations of type \eqref{intro:genPME} is rather surprising, but this is the core result of the present paper.
\\

Our first main aim in this paper is to present a general technique that reveals the gradient flow structure of nonlinear FPK equations with respect to the aforementioned differential geometry and identifies the corresponding entropy functions, and also the corresponding energy functionals. The key idea is to evaluate solutions to nonlinear FPK equations, which are weakly continuous paths $t \mapsto \mu_t$ in $\Pscr$, not only through the linear functions $t \mapsto \mu_t(\zeta) := \int_{\R^d} \zeta \,d\mu_t$, $\zeta \in C^2_c(\R^d)$, but through the much larger class of nonlinear finitely based functions on $\Pscr$, i.e. through all $F:\Pscr \to \R$ of type $F(\mu)= f(\mu(h_1),\dots,\mu(h_k)), k \in \N, h_i \in C^2_c(\R^d), f \in C^1_b(\R^k)$ (see the Notation section at the end of this introduction for the notation of the usual function spaces used here and throughout). More precisely, one simply calculates $\frac d {dt} F(\mu_t)$ using the fact that $t \mapsto \mu_t$ solves the nonlinear FPK equation.To obtain that $t\mapsto \mu_t$ satisfies \eqref{intro:GF}, this derivative should equal $-\text{diff}E_{\mu_t}(\nabla^\Pscr F_{\mu_t})$ for all $F$ as above, which provides an ansatz to find $E$ and hence the right gradient flow equation (see Section \ref{subsect:lifting-reveals} below for details).

The second aim of this paper is to implement this technique rigorously for the class of FPK equations in  \eqref{intro:genPME}. We would like to stress that this, in particular, requires to prove that the domain of $\nu \mapsto \text{diff}E_\nu$ is large enough for the energy functions $E$ corresponding to \eqref{intro:genPME}. This constitutes a core of our result, turning our technique into a rigorous mathematical proof rather than a merely purely heuristic computation. Our method seems also applicable for other classes of FPK equations. Since the corresponding energy is a Lyapunov function for the solutions, which can be used to analyze the asymptotic behavior of the gradient flow solutions (see \cite{BR-IndianaJ}), we hope that one benefit of this paper will be to identify Lyapunov functions for more general classes of nonlinear FPK equations and, thereby, to obtain new results for the asymptotics of their solutions.
The following first main theorem of this paper concerns the classical PME with arbitrary (not necessarily absolutely continuous with respect to Lebesgue measure) probability measures as initial data.
\paragraph{Theorem 1. (see Theorem \ref{thm:classPME-case} below for the precise formulation)}
Let $\mu_0 \in \Pscr$ and $m \geq 2$. The unique probability solution (see Definition \ref{def:distr-sol-gen-NLFPKE} below) to the classical PME
\begin{equation}\label{intro:classPME}\tag{PME}
	\partial_t u = \Delta(|u|^{m-1}u),\quad t >0
\end{equation}
 in $\bigcap_{\delta >0}L^\infty((\delta,\infty)\times \R^d)$ is the restricted unique solution to \eqref{intro:GF} on $\Pscr$ with energy $E_\nu= \frac{1}{m-1}\int_{\R^d}v(x)^{m} dx$ and gradient $\nabla^\Pscr E_\nu = \frac{\nabla (v^{m})}{v}$ ($=\frac{m}{m-1} \nabla(v^{m-1}$), if $\nu(dx) = v(x)dx$ with $v^{m-1} \in W^{1,1}_{\text{loc}}(\R^d)$, which is the case for the unique solution to \eqref{intro:classPME} for $m\geq 3$).
\\
\\
 In particular, the famous Barenblatt solutions turn out to satisfy a gradient flow equation on $\Pscr$. Our energy $E$ coincides with the energy function in \cite{Otto01}, 
 where a very nice physical interpretation of $E$ is presented, though only absolutely continuous initial measures have been considered there.
As the second main theorem, we have the following extension to the nonlinear FPK equations of type \eqref{intro:genPME} as follows.
\paragraph{Theorem 2. (see Theorems \ref{thm1} and \ref{prop:main-appl-general-case} below for the precise formulation)}
Under suitable assumptions on $\beta, D=-\nabla \Phi$ and $b$, the unique probability solution to \eqref{intro:genPME} is the restricted unique solution to \eqref{intro:GF} in $L^\infty([0,\infty)\times \R^d)$ with energy 
$$
E_\nu:= \int_{\R^d} \eta(v(x))\,dx + \int_{\R^d} \Phi(x)v(x)dx,\quad \nu(dx) = v(x)dx \in D_0,$$
where
$$ \eta(r):= \int_0^r g(s)ds := \int_0^r \int_1^s\frac{\beta'(w)}{wb(w)}dw \, ds,
$$
and gradient $\nabla^\Pscr_b E_\nu = b(v)\nabla \big(g(v)+\Phi\big)$ with respect to a weighted metric tensor $\langle \cdot, \cdot \rangle_b$ depending on $b$.
Moreover, $E$ is a Lyapunov function for $u$, i.e. $E(u(t))\leq E(u(s))$ for all $s \leq t$.
 \\
 \\
 For the definition of thte domain $D_0$, we refer to Theorem \ref{thm1}.
 As mentioned before, due to the nonlinear transport-type perturbation, we are led to consider the weighted $L^2$-metric tensor $\langle \cdot, \cdot \rangle_b$ with weight at $\nu(dx) = v(x)dx$ given by $\frac 1 {b(v)}$. 
 Furthermore, we prove for $\nu \in D_0$ that $\nabla^\Pscr_b E_\nu$ is a gradient vector field on $\R^d$ which, in addition, belongs to the closure of $\{b(v)\nabla \zeta\,|\, \zeta \in C^\infty_c(\R^d)\}$ in $L^2(\R^d,\R^d;\nu)$ (see Proposition \ref{prop:Michael-grad} below).
 We note that in the classical case of the heat equation, i.e. $\beta(r)=r$ and $\Phi=0$, $E$ is the classical Boltzmann entropy function.
 \\
 
We would like to repeat that our approach is substantially different from \cite{Otto01,JKO98} and subsequent related work, such as \cite{AGS05,Villani08-book,FG21,E10} and \cite{GH23}. In the latter, based on \cite{DSZ16}, by a large deviation principle the authors consider the classical PME as a gradient flow in a modified (with respect to \cite{Otto01}) Wasserstein-type geometry. We would also like to mention analogous results for discrete equations, for instance \cite{M11,EM14} and follow-up papers, which, however, are also quite different from the present work. 
For a different approach to gradient flows without relying on the Wasserstein metric, we refer to the substantial work \cite{PRST22}.
 A more detailed overview of the available literature on these directions is beyond the scope of this introduction. The point we want to stress is that all these works are indeed of a different flavor than the present paper, in which we identify FPK equations as gradient flows with respect to the differential geometry obtained by the previously mentioned lifting procedure, but without involving any metric.

\paragraph{Organization.}

The rest of this paper is organized as follows. We repeat the general notion of gradient flows on Riemannian manifolds in Section \ref{subsect:gradflow-general} and introduce our geometry on $\Pscr$ in \ref{subsect:diff-geom}. In \ref{subsect:lifting-reveals} we explain our ansatz to reveal the gradient flow structure of nonlinear FPKEs, and we present the notion of gradient flow in our differential geometry on $\Pscr$ in \ref{subsect:gradflow-P}. Section \ref{sect:results} contains the main results. First, we present our result on the classical PME and equations with general diffusivity functions in \ref{subsect:classPME}, then those on generalized PMEs in \ref{sect:main-sect:general-case} including nonlinear transport-type drifts, and, finally, in Section \ref{sect:matrix-case} we briefly discuss an extension to more general divergence-type equations. In the appendix we present a further natural deduction of our differential geometry.
	
	\paragraph{Notation.}
	We write $\Pscr := \Pscr(\R^d)$ for the set of Borel probability measures on $\R^d$ and $\Pscr_a \subseteq \Pscr$ for its subset of absolutely continuous measures with respect to Lebesgue measure $dx$.
	For a measure $\nu$ on a measurable space $(X,\mathcal{X})$ and an $\mathcal{X}$-measurable function $f: X \to \R$, we abbreviate $\nu(f):= \int_X f(x)\,d\nu(x)$, provided the integral is defined. A \textit{Borel curve} in $\Pscr$ is a curve $t\mapsto \mu_t$ from an interval $I \subseteq \R$ such that $t\mapsto \mu_t(A)$ is measurable for all Borel sets $A\in \Bscr(\R^d)$. Recall that the \textit{topology of weak convergence of measures} is the initial topology of the maps $\mu \mapsto \mu(g)$ for all bounded continuous $g: \R^d\to \R$. Restricted to $\Pscr$, it suffices to consider smooth compactly supported $g$.
	
	We use the following standard function space notation. $C^m(\R^d,\R^k)$, $C^m_b(\R^d,\R^k)$ and $C^m_c(\R^d,\R^k)$, $m \in \N_0 \cup \{\infty\}$, denote the spaces of $m$-fold differentiable functions $g: \R^d\to \R^k$ and, respectively, their subsets of bounded and compactly supported functions. For $k=1$, we write $C^m(\R^d),C^m_b(\R^d)$ and $C^m_c(\R^d)$, and for $m=0$ $C(\R^d,\R^k)$, $C_b(\R^d,\R^k)$ and $C_c(\R^d,\R^k)$, respectively. For $p \in [1,\infty]$, an open set $U \subseteq \R^d$ and a measure $\nu$ on $\Bscr(\R^d)$, the usual $L^p$-spaces of (equivalence classes of) Borel measurable functions $g: U\to \R^k$ with (locally) $\nu$-integrable $p$-th power are denoted by $L_{(\text{loc})}^p(U,\R^k;\nu)$. If either $\nu = dx$ or $k=1$, we simply write $L_{(\text{loc})}^p(U,\R^k)$ or $L^p_{(\text{loc})}(U;\nu)$, respectively. For the usual associated norms, we write $|\cdot|_p$, if no confusion about $d$, $k$ or $\nu$ can occur. Moreover, we denote by $W^{m,p}_{(\text{loc})}(\R^d)$ the usual Sobolev spaces of functions $g: \R^d \to \R$ with weak partial derivatives up to order $m$ in $L^p_{(\text{loc})}(\R^d)$. For $p=2$, we write $H_{(\text{loc})}^m(\R^d)$. These Sobolev spaces are always considered with respect to $dx$. $\Bscr_b(\R^d)$ is the space of Borel measurable, bounded functions $g: \R^d\to \R$.
		
	The usual Euclidean norm and inner product on $\R^d$ are denoted by $|\cdot|$ and $x\cdot y$. Depending on context, we write $\Id$ both for the identity vector field $\Phi: \R^d \to \R^d$, $\Phi(x) = x$, and for the $d\times d$-identity matrix. For $f: \R^d \to \R$, we write $f^+:= \max(0,f)$ and $f^-:= \max(0,-f)$. We set $\R_+ := [0,\infty)$ and, for $x,y \in \R$, $x\wedge y := \min(x,y)$ and $x\vee y := \max(x,y)$.
	
\section{Gradient flows on Riemannian manifolds and on $\Pscr$}\label{sect:2}
Here we briefly recall the notion of gradient flows on Riemannian manifolds from a purely differential geometric perspective. Then we introduce our geometry on $\Pscr$ and the notion of gradient flow on $\Pscr$ based on it.
\subsection{Gradient flows on Riemannian manifolds}\label{subsect:gradflow-general}
Let $M$ be a $d$-dimensional Riemannian manifold, $C^1(M)$ the space of differentiable functions $F: M \to \R$, $T_xM$ the (Hilbert) tangent space at $x \in M$ (i.e. $\ell \in T_xM$ if and only if $\ell: C^1(M)\to \R$ such that $\ell(FG)= \ell(F)G(x)+\ell(G)F(x)$ for all $F,G \in C^1(M)$) with inner product $\langle \cdot, \cdot \rangle_{T_xM}$ and dual space $T_xM^*$, and let $\text{diff}F_x$ be the differential of $F$ at $x$. Elements $\ell \in T_xM$ are called \textit{derivations} and act on $F \in C^1(M)$ via
\begin{align*}
	\ell(F) = \ddtau (F \circ \gamma^\ell(\tau))_{\tau=0} = \text{diff}F_x\bigg(\ddtau \gamma^\ell(\tau)_{|\tau=0}\bigg) = \big\langle \nabla F_x , \ddtau \gamma^\ell(\tau)_{|\tau=0}\big\rangle_{T_xM} = \big \langle \nabla F_x , \ell \big\rangle_{T_xM}.
\end{align*}
Here $ \gamma^\ell$ denotes any $C^1$-curve $\gamma^\ell: (-\varepsilon,\varepsilon)\to M$  such that the first equality holds for all $F \in C^1(M)$. At least one such curve with $\gamma^\ell (0) = x$ exists, since $T_xM$ can equivalently be defined as the set of equivalence classes of $C^1$-curves $\gamma: (-\varepsilon,\varepsilon)\to M$ with $\gamma(0) = x$, with equivalence relation $\gamma_1 \sim \gamma_2 :\iff \ddtau (F \circ \gamma_1(\tau))_{|\tau=0}  = \ddtau (F \circ \gamma_2(\tau))_{|\tau=0}$ for all $F \in C^1(M)$, i.e. there is an isomorphism between the set of derivations $\ell$ and such equivalence classes. The second equality is the definition of the differential $\text{diff}F$. Since $\text{diff}F_x\in T_xM^*$, the third equality follows from Riesz' representation theorem and uniquely characterizes the gradient $\nabla F_x \in T_xM$. The final equality follows from the aforementioned equivalence of both definitions of $T_xM$. In particular, letting $\ell  = -\nabla E_x$ for $E \in C^1(M)$, it follows that the derivation $-\nabla E_x$ acts on $F\in C^1(M)$ via
\begin{equation*}
	-\nabla E_x(F) = -\text{diff}E_x(\nabla F_x)= -\langle \nabla F_x, \nabla E_x\rangle_{T_xM}.
\end{equation*}
Let $I = (a,b)$ with $-\infty \leq a < b \leq +\infty$. A \textit{gradient flow} on $M$ is an equation
\begin{equation}\label{gradflow-general}
	\frac{d}{d\tau} x(\tau)_{|\tau=t} = -\nabla E_{x(t)},\quad\forall  t \in I
\end{equation}
in the tangent bundle $TM=\bigsqcup_{x \in M}T_xM$, to be solved for differentiable curves $t \mapsto x(t)$ on $M$. More precisely, the differentiability of $t\mapsto x(t)$ yields $\ddtau x(\tau)_{|\tau =t} \in T_{x(t)}M$ and, as explained above, $-\nabla E_{x(t)}$ acts via
\begin{equation*}
	-\nabla E_{x(t)}(F) = -\langle \nabla E_{x(t)},\nabla F_{x(t)}\rangle_{T_{x(t)}M},\quad \forall F \in C^{1}(M).
\end{equation*}Hence \eqref{gradflow-general} implies
\begin{equation}\label{GF-fct}
	\ddtau F\big(x(\tau)\big)_{|\tau =t} = -\text{diff}E_x(\nabla F_x)=-\langle \nabla E_{x(t)},\nabla F_{x(t)}\rangle_{T_{x(t)}M},\quad\forall t\in I, F \in C^1(M),
\end{equation}
which is, in fact, equivalent to \eqref{gradflow-general}. Indeed, for $\xi, \xi' \in T_xM$, $\text{diff}F_x(\xi) = \text{diff}F_x(\xi')$ for all $F\in C^1(M)$ implies $\xi = \xi'$, since every cotangent vector at $x$ is the differential at $x$ of a function in $C^1(M)$, and the set of cotangent elements at $x$ separates tangent vectors at $x$.
$E$ in \eqref{gradflow-general} is also called \textit{energy (function)} of the system, and choosing $F = E$ in \eqref{GF-fct} shows
$$E(x(t))-E(x(s)) = -\int_s^t |\nabla E_{x(r)}|^2_{T_{x(r)}M} \,dr,\quad \forall s \leq t \in I,$$
i.e. for any solution $x$ of \eqref{gradflow-general}, $t\mapsto E(x(t))$ is non-increasing.

\subsection{Differential geometry on $\Pscr$}\label{subsect:diff-geom}
As mentioned in the introduction, we consider $\Pscr$ as a manifold-like space with the lifted geometry from $\R^d$. Originally, an analogous geometry was introduced in \cite{AKR1,AKR3,Roeckner1998} on the space $\Gamma(M)$ of $\mathbb{Z}_+\cup \{+\infty\}$-valued Radon measures on a Riemannian manifold $M$ (see also the follow-up papers \cite{ORS95,AKR2,AKR4,OR97,R98,MaRoeck01,ADKL03}). The same approach for $M=\R^d$ and $\Gamma(M)$ replaced by $\Pscr = \Pscr(\R^d)$, which was first implemented in \cite[App.A]{RRW20} and was recalled in the introduction of this paper, leads to the geometry of the present paper. Here we summarize the resulting geometry without repeating in detail the general idea of its construction.
The test function class consists of the finitely-based functions
\begin{equation}\label{test-fcts}
	\Fscr C^2_b := \big\{F:\Pscr \to \R: F(\nu)= f(\nu(h_1),\dots,\nu(h_k)), k \in \N, h_i \in C^2_c(\R^d), f\in C^1_b(\R^k)\big\},
\end{equation}
and the class of differentiable curves $(-\varepsilon,\varepsilon)\ni t \mapsto \mu_t$ on $\Pscr$ passing through $\nu \in \Pscr$ at $t=0$ is given by all curves of type 
$$\mu^{\varphi,\nu}_t := \nu \circ (\Id+t\varphi)^{-1},\quad \varphi \in L^2(\R^d,\R^d;\nu).$$
Consequently, the tangent bundle $T\Pscr := \bigsqcup_{\nu \in \Pscr}T_\nu\Pscr$ consists of the Hilbert tangent spaces
\begin{equation}\label{tangent-spaces-P}
	T_\nu\Pscr := L^2(\R^d,\R^d;\nu),
\end{equation}
with metric tensor
\begin{equation}\label{metric-tensor-P}
	\langle \cdot, \cdot \rangle : \nu \mapsto \langle \cdot, \cdot \rangle_\nu
\end{equation}
on $T\Pscr$,
where $\langle \cdot, \cdot \rangle_\nu$ denotes the standard inner product on $L^2(\R^d,\R^d;\nu)$. Indeed, $\varphi \in T_\nu\Pscr$ acts as a derivation on $F\in \Fscr C^2_b, F(\mu) = f(\mu(h_1),\dots,\mu(h_k))$, because by the chain rule
\begin{equation*}\label{action-of-tangent-element on F}
	\ddt F(\mu^{\varphi,\nu}_t)_{|t=0} = \sum_{i=1}^k \partial_i f(\nu(h_1),\dots,\nu(h_k)) \langle \nabla h_i, \varphi \rangle_\nu.
\end{equation*}
The \textit{differential} of $F \in \Fscr C^2_b$ at $\nu \in \Pscr$ is the continuous linear functional on $T_\nu \Pscr$
\begin{equation}\label{diff-P}
	\text{diff} F_\nu: \varphi \mapsto \ddt F(\mu^{\varphi,\nu}_t)_{|t=0} = \sum_{i=1}^k \partial_i f(\nu(h_1),\dots,\nu(h_k)) \langle \nabla h_i, \varphi \rangle_\nu,
\end{equation}
and, in analogy to Riemannian geometry, the \textit{gradient} $\nabla ^\Pscr F$ at $\nu$ is defined as the unique element $\nabla ^\Pscr F_{\nu} \in L^2(\R^d,\R^d;\nu)$ associated to $\text{diff}F_\nu$ via the Riesz isomorphism, i.e.
\begin{equation}\label{grad-P}
	\nabla^\Pscr F_\nu := \sum_{i=1}^k \partial_i f(\nu(h_1),\dots,\nu(h_k)) \nabla h_i.
\end{equation}
Consequently, by definition we have
\begin{equation}\label{id-grad-diff-P}
	\text{diff}F_\nu (\varphi) = \langle \nabla ^\Pscr F_\nu, \varphi \rangle_\nu, \quad \forall \varphi \in T_\nu \Pscr = L^2(\R^d,\R^d;\nu).
\end{equation}
In particular, $\nabla^\Pscr F$ is independent of the representation $F(\mu) = f(\mu(h_1),\dots,\mu(h_k))$ of $F$.
For $G\in \Fscr C^2_b$, analogously to Section \ref{subsect:gradflow-general}, $\nabla^\Pscr G_\nu$ as a derivation acts on $F \in \Fscr C^2_b$ via
\begin{equation}\label{aux2}
	\nabla^\Pscr G_\nu(F) = \text{diff}G_\nu(\nabla^\Pscr F_\nu)= \langle \nabla^\Pscr G_\nu, \nabla^\Pscr F_\nu \rangle_\nu = \sum_{i=1}^k \partial_i f(\nu(h_1),\dots,\nu(h_k)) \langle \nabla h_i, \nabla^\Pscr G_\nu \rangle_\nu.
\end{equation}

The following slight generalizations have not been considered in \cite{RRW20} or, as far as we know, elsewhere, but are necessary for our main results.
\paragraph{Generalized differential and gradient.} More generally, for $G: D(G) \subseteq \Pscr \to \R$ not necessarily from $\Fscr C^2_b$, we define $D(\text{diff}G)$ to be the set of all $\nu \in D(G)$ such that
\begin{equation}\label{def:diffG-general}
		\text{diff}G_\nu (\varphi) := \ddt G(\mu_t^{\varphi, \nu})_{|t=0},\quad \varphi \in C^1_c(\R^d,\R^d),
\end{equation}
is well-defined and linear as well as continuous with respect to the usual $L^2(\R^d,\R^d;\nu)$-topology. For such $\nu$, $\text{diff}G_\nu$ has a unique linear and continuous extension to all of $L^2(\R^d,\R^d;\nu)$. Then again we define $\nabla^\Pscr G_{\nu}$ as the unique element in $T_\nu \Pscr$ such that
\begin{equation*}
	\text{diff}G_\nu(\varphi) = \big\langle \nabla^\Pscr G_\nu,\varphi \big\rangle_{\nu} ,\quad \forall\varphi \in C_c^1(\R^d,\R^d),
\end{equation*}
i.e. \eqref{aux2} remains valid.
In particular, we allow $G(\nu)$, $\text{diff}G_\nu$ and $\nabla^\Pscr G_\nu$ to be defined for $\nu$ from strict subsets of $\Pscr$ only.

\paragraph{Weighted metric tensors and gradient.}
We also introduce \textit{weighted} metric tensors on the tangent bundle $T\Pscr$, which are needed for our main results, in particular Theorem \ref{thm1}, as follows.
Let $\alpha: \Pscr \to \Bscr_b(\R^d)$ be such that $C^{-1} \leq \alpha(\nu)(x) \leq C$ for all $x \in \R^d$ and $\nu \in \Pscr$, with $C = C(\alpha) >1$ independent of $x$ and $\nu$. The inner product 
$$\langle \varphi, \tilde{\varphi}\rangle_{\alpha, \nu} := \langle \alpha(\nu)\varphi, \tilde{\varphi}\rangle_{\nu},\quad \varphi, \tilde{\varphi} \in L^2(\R^d,\R^d;\nu)$$ 
 is equivalent to the standard inner product $\langle \cdot, \cdot \rangle_{\nu}$, and we denote the weighted metric tensor $\nu \mapsto \langle \cdot, \cdot \rangle_{\alpha, \nu}$ by $\langle \cdot, \cdot \rangle_\alpha$. For $G: D(G)\subseteq \Pscr \to \R$ and $\nu\in D(\text{diff}G)$, we denote the gradient of $G$ at $\nu$ with respect to $\langle \cdot, \cdot \rangle_\alpha$ by $\nabla^\Pscr_\alpha G_\nu$, i.e. it is the unique element in $L^2(\R^d,\R^d;\nu)$ such that
\begin{equation*}
	\text{diff} G_\nu (\varphi) = \langle \nabla^\Pscr_\alpha G_\nu, \varphi \rangle_{\alpha,\nu},\quad \forall \varphi \in C^1_c(\R^d,\R^d).
\end{equation*}
For $G \in \Fscr C^2_b$, we have
\begin{equation}\label{eq:alpha-grad}
	\nabla^\Pscr_\alpha G_\nu = \alpha(\nu)^{-1}\nabla^\Pscr G_\nu.
\end{equation}

\subsection{Gradient flow character of nonlinear Fokker--Planck equations}\label{subsect:lifting-reveals}
Here, through a heuristic computation, we explain a general method to reveal the gradient flow character of nonlinear Fokker--Planck--Kolmogorov equations by considering solutions of the latter as curves on $\Pscr$ with the geometry from Section \ref{subsect:diff-geom}. First, we repeat the definition of distributional solutions to \eqref{intro:NLFPKE}.
\begin{dfn}\label{def:distr-sol-gen-NLFPKE}
	A \textit{distributional probability solution} $(0,\infty)\ni t\mapsto \mu_t$ to \eqref{intro:NLFPKE} is a Borel curve $t \mapsto \mu_t$ of probability measures $\mu_t \in \Pscr$ such that $(t,x)\mapsto a_{ij}(t,\mu_t,x)$ and $(t,x)\mapsto b_i(t,\mu_t,x)$, $i,j  \in \{1,\dots,d\},$ belong to $L^1_{\textup{loc}}([0,\infty)\times \R^d;\mu_tdt)$, and for all $0 \leq s <t$ and $\zeta \in C^2_c(\R^d)$
	\begin{equation*}
\int_{\R^d}\zeta(x)\,d\mu_t(x)-\int_{\R^d}\zeta(x)\,d\mu_s(x) = \int_s^t \int_{\R^d} a_{ij}(r,\mu_r,x)\partial_{ij}\zeta(x)+b_i(r,\mu_r,x)\partial_i \zeta(x)\,d\mu_r(x)dr.
			\end{equation*}
For brevity, we simply say \textit{solution} instead of \textit{distributional probability solution}.
\end{dfn}
Such solutions are clearly weakly continuous. Moreover, it follows that $t \mapsto \mu_t(\zeta)$ is differentiable $dt$-a.s. for every $\zeta \in C^2_c(\R^d)$, with derivative
	\begin{equation*}
	\ddt \int_{\R^d} \zeta(x) \,d\mu_t(x) = \int_{\R^d} a_{ij}(t,\mu_t,x)\partial_{ij}\zeta(x)  +b_i(t,\mu_t,x)\partial_i \zeta(x)\,d\mu_t(x)\quad dt\text{-a.s.},
\end{equation*}
where the exceptional set can be chosen independent of $\zeta$.
For the following heuristic computation we assume $\mu_t(dx) = u(t,x)dx$ and that the dependence of $a_{ij}$ on $\mu$ is of Nemytskii-type. For $F \in \Fscr C^2_b, F(\nu) = f(\nu(h_1),\dots,\nu(h_k))$, $t\mapsto F(\mu_t)$ is differentiable $dt$-a.s. with derivative
\begin{align*}
	\ddtau F(\mu_\tau)_{|\tau=t} &= \sum_{l=1}^k \partial_l f(\mu_t(h_1),\dots,\mu_t(h_k))\ddtau \mu_\tau(h_l)_{|\tau=t} \\&= \sum_{l=1}^k \partial_l f(\mu_t(h_1),\dots,\mu_t(h_k)) \int_{\R^d}a_{ij}(t,\mu_t)\partial_{ij} h_l + b(t,\mu_t)\nabla h_l \,d\mu_t
	\\& = \bigg\langle \sum_{l=1}^k \partial_lf(\mu_t(h_1),\dots,\mu_t(h_k)) \nabla h_l, \frac{-\divv (a(t,\mu_t)u(t))}{u(t)}+b(t,\mu_t) \bigg\rangle_{\mu_t} \\&= \bigg\langle \nabla^\Pscr F_{\mu_t}, \frac{-\divv (a(t,\mu_t)u(t))}{u(t)}+b(t,\mu_t)\bigg\rangle_{\mu_t}
\end{align*}
for $dt$-a.e. $t >0$.
Here we ignored the question of differentiability of $u(t)$ in $x$ and wrote $a = (a_{ij})_{1 \leq i,j \leq d}$ and $\divv (au) \in \R^d$ for the vector with entries $\partial_i(u a_{ij})$, $1\leq j \leq d$ (using Einstein summation convention).
Hence $t\mapsto \ddtau {\mu_\tau}_{|\tau=t}$ is a curve of tangent vectors (up to an $dt$-zero set) with action
\begin{equation*}\label{eq:aux1}
\ddtau {\mu_\tau}_{|\tau=t} (F)=  \text{diff}F_{\mu_t}(\ddtau {\mu_\tau}_{|\tau=t})=	\ddtau F(\mu_\tau)_{|\tau=t} =\bigg\langle \nabla^\Pscr F_{\mu_t}, \frac{-\divv( a(t,\mu_t)u(t))}{u(t)}+b(t,\mu_t)\bigg\rangle_{\mu_t} \quad \forall F \in \Fscr C^2_b.
\end{equation*}
Consequently, if there is $E: \Pscr \to \R$ such that $dt$-a.s. $\mu_t \in D(\text{diff}E)$ and
\begin{equation}\label{eq:ansatz-find-E}
	\text{diff}E_{\mu_t}(\nabla^\Pscr F_{\mu_t}) = \bigg\langle \nabla^\Pscr F_{\mu_t}, \frac{\divv( a(t,\mu_t)u(t))}{u(t)}-b(t,\mu_t)\bigg\rangle_{\mu_t},
\end{equation}
it follows that
\begin{equation*}
\nabla^\Pscr E_{\mu_t} = \frac{\divv( a(t,\mu_t)u(t))}{u(t)}-b(t,\mu_t) \in L^2(\R^d,\R^d;\mu_t)\quad dt\text{-a.s.},
\end{equation*}
and hence that $t\mapsto \mu_t$ satisfies $dt$-a.s. (compare with \eqref{GF-fct})
\begin{equation*}
	\ddtau {\mu_\tau}_{|\tau =t} = -\nabla^\Pscr E_{\mu_t}
\end{equation*}
in $T_{\mu_t}\Pscr$.
The $dt$-zero exceptional set cannot be avoided, since in general it is not possible to obtain differentiability of $t \mapsto \mu_t(\zeta)$ for \textit{every} $t>0$ from Definition \ref{def:distr-sol-gen-NLFPKE}.
\begin{rem}\label{rem:special-case-PME}
In the case $a_{ij}(t,\nu,x) = \delta_{ij}\frac{\beta(v(x))}{v(x)}$, $\nu(dx) = v(x)dx$, which is treated in Section \ref{sect:results}, one has $\frac{\divv(a(t,\mu_t)u(t))}{u(t)} = $$\frac{\nabla \beta(u(t))}{u(t)}$.
\end{rem}
Hence the essence of our ansatz is to first evaluate not only the linear maps $\nu \mapsto \nu(\zeta), \zeta\in C^2_c(\R^d)$, along solutions $t\mapsto \mu_t$ to \eqref{intro:NLFPKE}, but the much bigger class of nonlinear functions $F \in \Fscr C^2_b$, and then to identify the energy $E$ via \eqref{eq:ansatz-find-E}.

\subsection{Gradient flows on $\Pscr$}\label{subsect:gradflow-P}
 Let $\alpha: \Pscr \to \Bscr_b(\R^d), \, C^{-1}< \alpha < C, \, C>1$, be a weight as in Section \ref{subsect:diff-geom} with metric tensor $\langle \cdot, \cdot \rangle_\alpha$ (the non-weighted case corresponds to $\alpha \equiv 1$).

\begin{dfn}
	Let $I  = (a,b)$, $-\infty \leq a < b \leq +\infty$, and $t\mapsto \mu_t$ be such that there is a $dt$-zero set $N \subseteq I$ such that for all $t \in N^c$, $t\mapsto \mu_t(\zeta)$ is differentiable for all $\zeta\in C^2_c(\R^d)$. For $E: D(E) \subseteq \Pscr \to \mathbb{R}$ such that $\mu_t \in D(\text{diff}E)$ (and thus $\nabla^\Pscr_\alpha E_{\mu_t}$ is defined for all $t \in N^c$), $t\mapsto \mu_t$ is called a solution to the gradient flow with energy $E$ and weight $\alpha$, if it satisfies
	\begin{equation}\label{gradflow-P}\tag{$\Pscr$-GF}
		\frac{d}{d\tau} {\mu_\tau}_{|\tau=t} = -\nabla^\Pscr_\alpha E_{\mu_t}\quad dt\text{-a.s.}
	\end{equation}
\end{dfn}
\eqref{gradflow-P} is equivalent to $\mu_t \in D(\text{diff}E)$ $dt$-a.s. and 
\begin{equation}\label{GF-P-fct}
	\ddtau F\big(\mu_\tau\big)_{|\tau =t} = -\text{diff} E_{\mu_t}(\nabla^\Pscr_\alpha F_{\mu_t})\,\,\, \forall F \in \Fscr C^2_b\quad dt-\text{a.s.},
\end{equation}
and for $F(\nu) = f(\nu(h_1),\dots,\nu(h_k))$ and $ t\in N^c$, by \eqref{eq:alpha-grad} the latter is equivalent to
\begin{equation*}
	\sum_{i=1}^k\partial_if\big(\mu_t(h_1),\dots,\mu_t(h_k)\big){\ddtau\mu_\tau(h_i)}_{|\tau=t} =-\sum_{i=1}^k\partial_if\big(\mu_t(h_1),\dots,\mu_t(h_k)\big)\text{diff}E_{\mu_t}(\alpha(\mu_t)^{-1}\nabla h_i).
\end{equation*}
Therefore $t\mapsto \mu_t$ solves \eqref{gradflow-P} if and only if there is a zero set $N \subseteq I$ such that for all $t \in N^c$ and $\zeta \in C^2_c(\R^d)$ (with exceptional set independent of $\zeta$)
\begin{equation}\label{gradflow-P-single-h}
	{\ddtau\mu_\tau(\zeta)}_{|\tau=t} =  -\text{diff}E_{\mu_t}(\alpha(\mu_t)^{-1}\nabla \zeta).
\end{equation}
\begin{rem}\label{rem:decreasing-along-E}
	For $t\mapsto \mu_t$ and $E$ as above, heuristically choosing $F=E$ in \eqref{GF-P-fct} yields
	\begin{equation*}
		E(\mu_t)-E(\mu_s) = -\int_s^t |\nabla^\Pscr_\alpha E_{\mu_r}|^2_{\alpha,\mu_r}\,dr \leq 0,\quad \forall s < t \in I.
	\end{equation*}
However, since we do not necessarily have $E \in \Fscr C^2_b$ ($\nabla E_\alpha^\Pscr$ might be defined in the generalized sense explained in Section \ref{subsect:diff-geom}), the choice $F=E$ in \eqref{GF-P-fct} might not be permitted.
\end{rem}

\section{Generalized PME as gradient flows on $\Pscr$ and identification of the energy}\label{sect:results}

In this section we present our main results: We show that solutions to a class of generalized PMEs solve gradient flows equations on $\Pscr$ and we identify the corresponding energy function. We also prove a uniqueness result for the gradient flows. First, we consider the classical PME, see Theorem \ref{thm:classPME-case}. In Section \ref{sect:main-sect:general-case} we consider generalized PMEs with general diffusivity functions and an additional nonlinear transport-type first-order term, see Theorem \ref{thm1}. Finally, in Section \ref{sect:matrix-case}, we present a further generalization to a larger class of divergence-type equations.
We stress that throughout this section the uniqueness of solutions to the FPK equation does not play any role for our results and proofs, except, of course, for the uniqueness assertions concerning the gradient flow in theorems \ref{thm:classPME-case} and \ref{prop:main-appl-general-case}.
\\

Consider the equation \eqref{intro:genPME}. Conditions on the diffusivity function $\beta$ as well as on the spatial drift-vector field $D$ and the nonlinearity $b$ are given below. In the literature, equations with $D =0$ are also called generalized PME or PME with generalized diffusivity, see for instance \cite{V07}. In the present paper, \textit{generalized} PME refers both to the general diffusivity $\beta$ and to the additional transport-type drift. \eqref{intro:genPME} is a special case of \eqref{intro:NLFPKE} with $a_{ij}(t,\mu,x)= \delta_{ij}\frac{\beta(u(x))}{u(x)}$ and $b_i(t,\mu,x) = D^{(i)}(x)b(u(x))$, where $\mu(dx) = u(x)dx$.
For the case of Nemytskii-type coefficients, the following definition of solution is a more common (but equivalent) formulation than Definition \ref{def:distr-sol-gen-NLFPKE}. 

\begin{dfn}\label{d6.1}\rm $\mu:[0,{\infty})\to{\mathcal{P}}, \mu: t \mapsto \mu_t$, is a \textit{weakly continuous distributional probability solution to \eqref{intro:genPME}} (for brevity just \textit{solution}) with initial datum $\mu_0 \in \Pscr$ if $t \mapsto \mu_t$ is weakly continuous, $\mu_t(dx)= u(t,x)dx$ $dt$-a.s.,
	\begin{equation*}
		\label{e6.1}
		u, \beta(u)\in L^1_{\rm loc}([0,{\infty})\times{\mathbb{R}}^d), \,\, b(u)D \in L^1_{\rm loc}([0,{\infty})\times{\mathbb{R}}^d,\R^d), \end{equation*}
	and
	\begin{eqnarray*}
		\label{e6.2}
		&&\hspace*{-10mm}\displaystyle \int^{\infty}_0\int_{{\mathbb{R}}^d}
		u(t,x)(\partial_t\zeta(t,x)+b(u(t,x))D(x)\cdot\nabla{\zeta}(t,x))\\
		&&\hspace*{-10mm}\quad+\beta(u(t,x))\Delta{\zeta}(t,x)dt\,dx+\mu_0({\zeta}(0,\cdot))=0,
		\ \forall {\zeta}\in C^{\infty}_0([0,{\infty})\times{\mathbb{R}}^d).\nonumber\end{eqnarray*}
\end{dfn}
It follows that the initial datum $\mu_0$ is attained weakly, i.e. $\mu_t \longrightarrow \mu_0$ as $t \to 0$ in the topology of weak convergence of probability measures.

\subsection{The classical porous media equation}\label{subsect:classPME}
For $m>1$, consider the classical porous media equation \eqref{intro:classPME} on $\R^d$,
i.e. \eqref{intro:genPME} with $\beta(r) =| r|^{m-1}r$ and $D =0$, with an arbitrary initial condition $\mu_0 \in \Pscr$.
By \cite[Thm.1]{Pierre82}, for every $\mu_0 \in \Pscr$, there is a unique solution $(0,\infty) \ni t \mapsto u(t,x)dx \in \Pscr$ to \eqref{intro:classPME} in $\bigcap_{\delta>0}L^\infty((\delta,\infty)\times \R^d)$ such that $u(t,x)dx \longrightarrow \mu_0$ weakly as $t \to 0$. Furthermore, by \cite[Sect.5]{NLFPK-DDSDE5}, $u(t)^{\alpha} \in H^1(\R^d)$ for all $\alpha \geq \frac{m+1}{2} $ for $dt$-a.a. $t >0$.

\paragraph{Identification of the energy.}
According to Section \ref{subsect:lifting-reveals} (in particular, Remark \ref{rem:special-case-PME}), the ansatz to find the energy $E$ for the gradient flow equation is to find $E$ such that $\mu_t \in D(\text{diff}E)$ dt$-a.s.$ and
$$	\text{diff}E_{\nu}(\varphi) = \bigg\langle \varphi, \frac{\nabla (v^m)}{v}\bigg\rangle_{\nu} \quad \forall \varphi \in C^1_c(\R^d,\R^d)$$
for all $\nu \in D(\text{diff}E)$.
We shall obtain
$$D_{\text{nice}}:= \bigg\{\nu(dx) = v(x)dx \in \Pscr\,\big|\, v \in L^\infty(\R^d), v^m \in W^{1,1}_{\text{loc}}, \frac{\nabla(v^m)}{v}\in L^2(\R^d,\R^d;\nu)\bigg\} \subseteq D(\text{diff}E).$$
As shown in the proof of Theorem \ref{thm:classPME-case} below, the right choice of $E$ is
\begin{equation}\label{def:E-classPME}
	E: D(E)\subseteq \Pscr_a \to \R, \quad E(udx) := \int_{\R^d} \eta(u(x)) dx, \quad \eta(r):= m\int_0^r \int_1^s w^{m-2}\, dw \,ds, r\geq 0, 
\end{equation}
i.e. we have, since $D(E)\subseteq \Pscr$,
\begin{equation}\label{eq:E-class-PME}
	E(u dx ) = \frac{1}{m-1}\bigg(\int_{\R^d}u(x)^{m} dx -m\bigg),
\end{equation}
and
$D(E) = \Pscr_a \cap L^m(\R^d)$. Since $E$ appears only through its differential (a first-order functional), the (compared with Otto \cite{Otto01}) additional zero-order summand $\frac{-m}{m-1}$ can be dropped.

\paragraph{Main result.}
Note that for $\varphi \in C^1_c(\R^d,\R^d)$, $\nu(dx)= v(x)dx \in \Pscr_a$ and $|\tau| < \varepsilon = \varepsilon(\varphi) >0$, by the transformation rule the measure $\mu^{\varphi,\nu}_\tau = \nu \circ (\Id+\tau \varphi)^{-1}$ is absolutely continuous with respect to $dx$ with density
\begin{equation}\label{eq:density-pushforward}
	\frac{d\mu^{\varphi,\nu}_\tau}{dx}(x) = v\big((\Id+\tau \varphi)^{-1} (x)\big)|\det D(\Id+\tau\varphi)(x)|^{-1},
\end{equation}
where $D\psi$ denotes the Jacobian of $\psi \in C^1(\R^d,\R^d)$.

\begin{theorem}\label{thm:classPME-case}
	Let $m\geq 2$, $\mu_0 \in \Pscr$, and $t\mapsto \mu_t(dx) = u(t,x)dx$ the unique solution to \eqref{intro:classPME} in $\bigcap_{\delta >0}L^\infty((\delta,\infty)\times \R^d)$ with initial datum $\mu_0$.
	\begin{enumerate}
		\item [(i)] 	$t\mapsto \mu_t$ is a solution to \eqref{intro:GF} in $\Pscr$ with $E$ as in \eqref{def:E-classPME} and the standard $L^2$-metric tensor $\langle \cdot, \cdot \rangle$ on $T\Pscr$ (i.e. with weight $\alpha \equiv 1$). The gradient of $E$ for $\nu(dx) = v(x)dx \in D_{\text{nice}}$ is
		$$\nabla^\Pscr E_\nu=  \frac{\nabla (v^{m})}{v} \in L^2(\R^d,\R^d;\nu).$$
		\item [(ii)] If, in addition, $m\geq 3$, then
		\begin{equation*}
			\nabla^\Pscr E_{\mu_t}=  \frac{\nabla (u(t)^{m})}{u(t)} = \frac{m}{m-1}\nabla(u(t)^{m-1}) \quad dt\text{-a.s.},
		\end{equation*}
		i.e. in this case $\nabla^\Pscr E_{\mu_t}$  is a gradient vector field in $T_{\mu_t}\Pscr = L^2(\R^d,\R^d;\mu_t)$.
		\item[(iii)] $t\mapsto \mu_t$ is the unique solution to the gradient flow with $E$ as in $(i)$ in $\big(\bigcap_{\delta >0}L^\infty((\delta,\infty)\times \R^d)\big)\cap L^1_{\text{loc}}([0,\infty)\times \R^d)\cap D_{\text{nice}}$ such that $\mu_t \longrightarrow \mu_0$ weakly as $t\to 0$.
		\item [(iv)] If $d\geq 3$ and $\mu_0 \in L^\infty(\R^d)$, then $u \in L^\infty([0,\infty)\times \R^d)$ and for all $T>0$
		\begin{equation}\label{aux123}
			\int_0^T \int_{\R^d} |\nabla u(t)^{\frac{m+1}{2}}|^2\,dx dt < +\infty,
	\end{equation}
hence, if $m \geq 3$, 
\begin{equation}\label{aux1234}
	\int_0^T |\nabla^\Pscr E_{\mu_t}|^2_{T_{\mu_t}\Pscr}\,dt < +\infty.
\end{equation}
		
	\end{enumerate}

\end{theorem}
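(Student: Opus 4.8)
The crux of \emph{part (i)} is the inclusion $D_{\text{nice}}\subseteq D(\text{diff}E)$ together with the formula $\nabla^\Pscr E_\nu=\nabla(v^m)/v$ for $\nu(dx)=v(x)\,dx\in D_{\text{nice}}$. Fix such $\nu$ and $\varphi\in C^1_c(\R^d,\R^d)$, and set $J_\tau(y):=\det D(\Id+\tau\varphi)(y)$ (positive for small $\tau$, and $\equiv1$ off the compact set $\supp\varphi$). For small $\tau$, $\mu^{\varphi,\nu}_\tau\in\Pscr_a\cap L^\infty(\R^d)\subseteq D(E)$; writing $u_\tau:=d\mu^{\varphi,\nu}_\tau/dx$, we have $E(\mu^{\varphi,\nu}_\tau)=\tfrac{1}{m-1}\int_{\R^d}u_\tau^m\,dx$ up to an additive constant (immaterial for the $\tau$-derivative, $\mu^{\varphi,\nu}_\tau$ being a probability measure), and inserting the density formula \eqref{eq:density-pushforward} followed by the substitution $x=(\Id+\tau\varphi)(y)$ yields $E(\mu^{\varphi,\nu}_\tau)=\tfrac{1}{m-1}\int_{\R^d}v(y)^m J_\tau(y)^{1-m}\,dy$ (plus that constant). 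Since $v\in L^\infty$ and $\tau\mapsto J_\tau^{1-m}$ is smooth with $\tau$-derivative $(1-m)\divv\varphi$ at $\tau=0$, uniformly on $\supp\varphi$, dominated convergence gives $\ddtau E(\mu^{\varphi,\nu}_\tau)_{|\tau=0}=-\int_{\R^d}v^m\,\divv\varphi\,dy$, and an integration by parts (legitimate as $v^m\in W^{1,1}_{\text{loc}}$ and $\varphi$ has compact support) turns this into
$$\text{diff}E_\nu(\varphi)=\int_{\R^d}\nabla(v^m)\cdot\varphi\,dy=\Big\langle\frac{\nabla(v^m)}{v},\varphi\Big\rangle_\nu ,$$
which is $L^2(\R^d,\R^d;\nu)$-continuous in $\varphi$ precisely because $\nabla(v^m)/v\in L^2(\R^d,\R^d;\nu)$, the defining property of $D_{\text{nice}}$. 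Hence $\nu\in D(\text{diff}E)$ and $\nabla^\Pscr E_\nu=\nabla(v^m)/v$.

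To complete (i) one checks $\mu_t\in D_{\text{nice}}$ $dt$-a.s.: $u(t)\in L^\infty(\R^d)$ because $u\in\bigcap_{\delta>0}L^\infty((\delta,\infty)\times\R^d)$; by \cite[Sect.5]{NLFPK-DDSDE5}, $w_t:=u(t)^{(m+1)/2}$ and $u(t)^m$ lie in $H^1(\R^d)$ $dt$-a.s.\ (as $m\geq(m+1)/2$), so $u(t)^m\in W^{1,1}_{\text{loc}}$, and the Sobolev chain rule gives $\nabla(u(t)^m)=\tfrac{2m}{m+1}u(t)^{(m-1)/2}\nabla w_t$; hence
$$|\nabla^\Pscr E_{\mu_t}|^2_{T_{\mu_t}\Pscr}=\int_{\R^d}\frac{|\nabla(u(t)^m)|^2}{u(t)}\,dx=\frac{4m^2}{(m+1)^2}\int_{\R^d}u(t)^{m-2}|\nabla w_t|^2\,dx ,$$
which, as $m\geq2$, is $\leq\tfrac{4m^2}{(m+1)^2}\|u(t)\|_\infty^{m-2}\|\nabla w_t\|_2^2<\infty$, so $\nabla(u(t)^m)/u(t)\in L^2(\R^d,\R^d;\mu_t)$ and $\mu_t\in D_{\text{nice}}$. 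By the characterization \eqref{gradflow-P-single-h} with $\alpha\equiv1$, it remains to check that for $\zeta\in C^2_c(\R^d)$, $\ddt\mu_t(\zeta)=\int_{\R^d}\beta(u(t))\Delta\zeta\,dx$ (valid $dt$-a.s., as $t\mapsto\mu_t$ solves the distributional PME) equals $-\text{diff}E_{\mu_t}(\nabla\zeta)=-\int_{\R^d}\nabla(u(t)^m)\cdot\nabla\zeta\,dx$; since $u(t)\geq0$ gives $\beta(u(t))=u(t)^m$, one last integration by parts settles this. For \emph{part (ii)}, $m\geq3$ forces $m-1\geq(m+1)/2$, so $u(t)^{m-1}\in H^1(\R^d)$ $dt$-a.s., and the Sobolev chain rule gives $\nabla(u(t)^{m-1})=\tfrac{2(m-1)}{m+1}u(t)^{(m-3)/2}\nabla w_t$; comparing with the expression for $\nabla(u(t)^m)$ above yields $\nabla(u(t)^m)/u(t)=\tfrac{m}{m-1}\nabla(u(t)^{m-1})$ as elements of $L^2(\R^d,\R^d;\mu_t)$ ($\{u(t)=0\}$ being $\mu_t$-null), which is a gradient vector field.

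For \emph{part (iii)}, let $t\mapsto\nu_t(dx)=v_t(x)\,dx$ be any gradient-flow solution with energy $E$ and $\alpha\equiv1$ in the stated class with $\nu_t\to\mu_0$ weakly as $t\to0$. Since $\nu_t\in D_{\text{nice}}$ $dt$-a.s., part (i) and \eqref{gradflow-P-single-h} give $\ddt\nu_t(\zeta)=\int_{\R^d}\beta(v_t)\Delta\zeta\,dx$ for all $\zeta\in C^2_c(\R^d)$, $dt$-a.s.; integrating this over compact subintervals of $(0,\infty)$ (on which $v$ is bounded), using the absolute continuity of $t\mapsto\nu_t(\zeta)$ built into the solution notion, and letting $t\to0$, one sees that $t\mapsto\nu_t$ is a distributional probability solution of \eqref{intro:classPME} in $\bigcap_{\delta>0}L^\infty((\delta,\infty)\times\R^d)$ with datum $\mu_0$, whence $\nu_t=\mu_t$ by \cite[Thm.1]{Pierre82}. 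Conversely, $t\mapsto\mu_t$ itself lies in the class (in $\bigcap_{\delta>0}L^\infty((\delta,\infty)\times\R^d)$ by choice; in $L^1_{\text{loc}}([0,\infty)\times\R^d)$ since $\int_0^T\int_{\R^d}u(t,x)\,dx\,dt=T$; in $D_{\text{nice}}$ $dt$-a.s.\ by (i)), is a gradient-flow solution by (i), and attains $\mu_0$ weakly, hence is the unique one. For \emph{part (iv)}, the hypotheses $d\geq3$, $\mu_0\in L^\infty(\R^d)$ allow invoking the standard $L^\infty$-bound $\|u(t)\|_\infty\leq\|\mu_0\|_\infty$ for the PME (e.g.\ by comparison with the constant supersolution $\|\mu_0\|_\infty$, using that $u$ is the limit of the corresponding approximating solutions), i.e.\ $u\in L^\infty([0,\infty)\times\R^d)$; the standard $L^2$-energy estimate for \eqref{intro:classPME} (testing with $u$, justified by approximation, with $u_0\in L^1\cap L^\infty\subseteq L^2$) reads $\ddt\|u(t)\|_2^2=-\tfrac{8m}{(m+1)^2}\|\nabla u(t)^{(m+1)/2}\|_2^2$, which integrates over $[0,T]$ to \eqref{aux123}; and for $m\geq3$, combining \eqref{aux123} with the pointwise bound $|\nabla^\Pscr E_{\mu_t}|^2_{T_{\mu_t}\Pscr}\leq\tfrac{4m^2}{(m+1)^2}\|u\|_{L^\infty([0,\infty)\times\R^d)}^{m-2}\|\nabla u(t)^{(m+1)/2}\|_2^2$ from part (i) gives \eqref{aux1234}.

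\emph{Main obstacle.} The delicate step is the very first one of (i): differentiating the \emph{nonlinear} functional $E$ along the pushforward curves $\mu^{\varphi,\nu}_\tau$ and showing $\text{diff}E_\nu$ extends continuously to all of $L^2(\R^d,\R^d;\nu)$ — equivalently, that $D(\text{diff}E)$ is large enough to contain the PME solution $\mu_t$. This is exactly the domain issue emphasized in the introduction, and it is what turns the heuristic of Section \ref{subsect:lifting-reveals} into a rigorous proof; everything else reduces to the Sobolev chain rule, the cited $H^1$-regularity of the powers $u(t)^\alpha$, and standard comparison/energy estimates for the PME. A secondary point is the passage, in (iii), from the $dt$-a.e.\ differential identity back to the integrated distributional formulation needed to invoke \cite{Pierre82}.
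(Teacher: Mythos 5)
Your argument for parts (i)--(iii) is correct and follows essentially the same route as the paper: differentiate $E$ along the pushforward curves via the Jacobian determinant (the content of Lemma \ref{lem:det-calc}, whose statement you assert rather than prove, but which is standard), integrate by parts using $v^m\in W^{1,1}_{\text{loc}}(\R^d)$ to identify $\text{diff}E_\nu$, verify $\mu_t\in D_{\text{nice}}$ from the $H^1$-regularity of powers of $u(t)$ in \cite{NLFPK-DDSDE5}, and reduce (iii) to Pierre's uniqueness theorem \cite{Pierre82}. Two points of divergence are worth noting. First, for the membership $\nabla(u(t)^m)/u(t)\in L^2(\R^d,\R^d;\mu_t)$ you use $u(t)^{(m+1)/2}\in H^1(\R^d)$ together with the bound $\|u(t)\|_\infty^{m-2}$, whereas the paper uses $u(t)^{m-\frac12}\in H^1(\R^d)$ (legitimate since $m-\tfrac12\geq\tfrac{m+1}{2}$ for $m\geq2$), which yields the same conclusion with no $L^\infty$-bound at all; both work here, and you might also note, as the paper does, that the $d\geq 3$ restriction in the cited regularity remark can be dropped for the classical PME. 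Second, in (iv) the paper identifies $u$ with the $H^{-1}$-solution of \cite{RW08} via the uniqueness result of \cite{BR23} and quotes \cite[Thm.1.2(4)]{RW08} to obtain \eqref{aux123}, while you instead sketch the underlying energy identity $\ddt\|u(t)\|_2^2=-\tfrac{8m}{(m+1)^2}\|\nabla u(t)^{(m+1)/2}\|_2^2$ ``justified by approximation''; this is indeed the estimate behind the cited result, but as written it is only a sketch --- making it rigorous requires the approximation scheme and lower semicontinuity of the gradient term, which is precisely what the citation supplies, so you should either carry that out or cite it. Your observation that the bound $\int_{\R^d}|\nabla(u^m)|^2/u\,dx\leq \tfrac{4m^2}{(m+1)^2}\|u\|_\infty^{m-2}\|\nabla u^{(m+1)/2}\|_2^2$ already gives \eqref{aux1234} for all $m\geq2$ is correct and slightly sharper than the stated $m\geq3$.
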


\begin{proof}
	\begin{enumerate}
		\item [(i)] Let $t \mapsto \mu_t(dx) = u(t,x)dx$ be as in the assertion. Then for all $\zeta \in C^2_c(\R^d)$ and $dt$-a.e. $t > 0$ (with exceptional set independent of $\zeta$), we have
		\begin{equation*}
			{\ddtau\mu_\tau(\zeta)}_{|\tau=t} = \int_{\R^d}\frac{-\nabla(u(t,x)^{m})}{u(t,x)}\cdot \nabla \zeta(x) \,d\mu_t(x).
		\end{equation*}
		Hence, considering \eqref{gradflow-P-single-h}, to prove the assertion, it is sufficient to show
		\begin{equation}\label{first-eq-proof1}
			D_{\text{nice}}\subseteq D(\text{diff}E),
		\end{equation}

		\begin{equation}\label{aux12}
			\int_{\R^d} \frac{-\nabla (v(x)^{m})}{v(x)} \cdot \varphi(x) \,d\nu(x) = -\text{diff}E_\nu(\varphi)
		\end{equation}
		for every $\varphi \in C^1_c(\R^d,\R^d)$ and $\nu(dx) = v(x)dx \in D_{\text{nice}}$, and $\mu_t \in  D_{\text{nice}}$ $dt$-a.s.
		Concerning \eqref{first-eq-proof1}, by \eqref{eq:density-pushforward} it is easy to see that $\mu^{\varphi,\nu}_\tau \in D(E)$ for all $\varphi \in C^1_c(\R^d,\R^d)$, $\nu(dx) = v(x)dx \in \Pscr_a$ such that $v \in L^\infty(\R^d)$ and $|\tau| < \varepsilon = \varepsilon(\varphi, \nu) >0$. 
		Moreover, $\tau \mapsto E(\mu^{\varphi,\nu}_\tau)$ is differentiable at $\tau =0$, since by the transformation rule
		\begin{align*}
			E(\mu^{\varphi,\nu}_\tau) &\notag =\int_{\R^d} \eta\bigg(\frac{d\mu^{\varphi,\nu}_\tau}{dx}(x) \bigg)\, dx 
			\\& \notag =\frac{1}{m-1}\int_{\R^d} v(x)^{m}|\det D(\Id+\tau\varphi)(x+\tau \varphi(x))|^{-m}|\cdot |\det D(\Id+\tau \varphi)(x)|\, dx - \frac{m}{m-1}
		\end{align*}
		and since by Lemma \ref{lem:det-calc} below and because $v \in L^\infty(\R^d)$ the integrand on the right is differentiable in $\tau \in (-\varepsilon,\varepsilon)$ for all $x \in \R^d$ with uniformly in $\tau$ $L^1(\R^d)$-bounded derivative. 
		Hence, for $\nu\in D_{\text{nice}}$, Lemma \ref{lem:det-calc} yields $\nu \in D(\text{diff}E)$ and
		\begin{equation*}
			\text{diff}E_{\nu}(\varphi)  = \ddtau E(\mu^{\varphi,\nu}_{\tau})_{|\tau=0}= -\int_{\R^d} v(x)^{m}\divv \varphi (x)\,dx  = \bigg\langle \frac{\nabla(v^m)}{v},\varphi \bigg\rangle_\nu.
		\end{equation*}
		Consequently, also \eqref{aux12} holds. It remains to show $\mu_t \in D_{\text{nice}}$ $dt$-a.s. To this end, recall $u(t)^{m-\frac 1 2} \in H^1(\R^d)$ $dt$-a.s. This follows from \cite[Rem.5.4]{NLFPK-DDSDE5} since $m-\frac 1 2 \geq \frac{m+1}{2}$ for $m \geq 2$. Note that \cite[Rem.5.4]{NLFPK-DDSDE5} is restricted to $d \geq 3$. However, this restriction is only needed for the more general diffusivity functions $\beta$ instead of $r\mapsto |r|^{m-1}r$ considered in \cite{NLFPK-DDSDE5}. More precisely, tracing through the proof of \cite[Thm.5.2]{NLFPK-DDSDE5}, one sees that $d \geq 3$ is only needed for the $L^1-L^\infty$-regularization result of Theorem 4.1 of the same reference. But this result is true for \eqref{intro:classPME} for any $d \geq 1$, see for instance \cite[Thm.9.12]{V07}. Since $u(t)^{m-\frac 1 2} \in H^1(\R^d)$ implies $u(t)^m \in W^{1,1}_{\text{loc}}(\R^d)$ as well as $\frac{\nabla(u(t)^m)}{u(t)}\in L^2(\R^d,\R^d;\mu_t)$, the proof is complete.
		\item[(ii)] For $m\geq 3$ we have $m-1 \geq \frac{m+1}{2}$, hence by \cite[Rem.5.4]{BR-IndianaJ}, $u(t)^{m-1} \in H^1(\R^d)$ $dt$-a.s. Now the additional equality of the assertion follows from a straightforward calculation.
		\item[(iii)] Any such solution $t\mapsto \nu_t$ to \eqref{intro:GF} with $E$ as in (i) is a solution to \eqref{intro:classPME}. Hence the claim follows from the restricted uniqueness result \cite[Thm.1]{Pierre82} for \eqref{intro:classPME}.
		\item[(iv)] The fact that, in this case, $u \in L^\infty([0,\infty)\times \R^d)$ is well-known (see e.g. \cite[Thm.2.2]{NLFPK-DDSDE5}). Hence by the uniqueness result in \cite{BR23} (see Remark 3.3 therein) and \cite[Thm.1.2(3),(4)]{RW08} (for the case $B \equiv 0$, i.e. zero noise) $u =X$, where $X$ is the solution to \eqref{intro:classPME} constructed in \cite{RW08} on $H^{-1}(\R^d)$ instead of $L^1(\R^d)$. Hence we may apply \cite[Thm.1.2(4)]{RW08} to prove \eqref{aux123}, which then implies \eqref{aux1234}.
	\end{enumerate}
\end{proof}
We used the following lemma for the previous proof. Recall that $D\psi$ denotes the Jacobian for $\psi \in C^1(\R^d,\R^d)$.
\begin{lem}\label{lem:det-calc}
	Let $\varphi \in C^1_b(\R^d,\R^d)$. Then, there is $\varepsilon = \varepsilon(\varphi)>0$, which does not depend on $x \in \R^d$, such that for all $x \in \R^d$:
	\begin{enumerate}
		\item [(i)] $\tau \mapsto \det D(\Id+\tau \varphi)(x)$ is differentiable on $(-\varepsilon,\varepsilon)$ with derivative $\divv \varphi(x)$ at $\tau =0$. 
		\item[(ii)] $\tau \mapsto \det D(\Id+\tau \varphi)(x+\tau \varphi(x))^{-1}$ is differentiable on $(-\varepsilon,\varepsilon)$ with derivative $-\divv \varphi(x)$ at $\tau =0$.
	\end{enumerate}
	
	Moreover, both maps and their first derivatives in $\tau$ are uniformly bounded in $(\tau,x) \in (-\varepsilon,\varepsilon)\times \R^d$.
\end{lem}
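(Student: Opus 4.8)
The plan is to unpack the notation and then reduce both claims to the polynomial identity $\det(\Id+\tau A)=\sum_{k=0}^{d}\tau^{k}e_{k}(A)$, valid for any $d\times d$ matrix $A$, where $e_{0}(A)=1$, $e_{1}(A)=\Tr A$, and $e_{k}(A)$ is the sum of the $k\times k$ principal minors of $A$, so that $|e_{k}(A)|\le\binom{d}{k}\|A\|_{\max}^{k}$. First I would record that the Jacobian of $\Id+\tau\varphi$ is the matrix field $D(\Id+\tau\varphi)(y)=\Id+\tau D\varphi(y)$, and that $\varphi\in C^{1}_{b}$ supplies a bound $\|D\varphi(y)\|_{\mathrm{op}}\le M$ uniform in $y$ (this holds in particular for the $\varphi\in C^1_c$ used in the application); I set $\varepsilon:=(2M+1)^{-1}$, which does not depend on $x$. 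For $|\tau|<\varepsilon$ each $\Id+\tau D\varphi(y)$ is invertible by a Neumann series, with $\|(\Id+\tau D\varphi(y))^{-1}\|_{\mathrm{op}}\le(1-|\tau|M)^{-1}$; since $\tau\mapsto\det(\Id+\tau D\varphi(y))$ is continuous, equal to $1$ at $\tau=0$, and never vanishes there, it stays in a fixed interval $[c,C]\subset(0,\infty)$ with $c,C$ depending only on $M$ and $d$, uniformly in $y$. This justifies removing the absolute values in the statement and provides the lower bounds used below.

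For (i): for fixed $x$ the map $\tau\mapsto\det D(\Id+\tau\varphi)(x)=\det(\Id+\tau D\varphi(x))$ is, by the identity above, a polynomial in $\tau$ of degree $\le d$ whose coefficients $e_{k}(D\varphi(x))$ are bounded uniformly in $x$. Hence it is differentiable on $\R$, its derivative at $\tau=0$ equals $e_{1}(D\varphi(x))=\Tr D\varphi(x)=\divv\varphi(x)$, and both the polynomial and its $\tau$-derivative (again a polynomial with uniformly bounded coefficients) are bounded uniformly on $(-\varepsilon,\varepsilon)\times\R^{d}$.

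For (ii): put $A(\tau,x):=D\varphi(x+\tau\varphi(x))$, which is continuous in $\tau$ (the composition of the continuous $D\varphi$ with the affine map $\tau\mapsto x+\tau\varphi(x)$), satisfies $\|A(\tau,x)\|_{\mathrm{op}}\le M$, and equals $D\varphi(x)$ at $\tau=0$. Then $p(\tau,x):=\det D(\Id+\tau\varphi)(x+\tau\varphi(x))=\det(\Id+\tau A(\tau,x))$ lies in $[c,C]$ for $|\tau|<\varepsilon$ by the first step, so $B(\tau,x):=p(\tau,x)^{-1}$ is well-defined and bounded there. Writing $p(\tau,x)-1=\tau\,\Tr A(\tau,x)+\sum_{k=2}^{d}\tau^{k}e_{k}(A(\tau,x))$, the sum is $O(\tau^{2})$ uniformly in $x$, while $\Tr A(\tau,x)\to\Tr D\varphi(x)=\divv\varphi(x)$ as $\tau\to0$ by continuity of $D\varphi$; hence $\tfrac{p(\tau,x)-1}{\tau}\to\divv\varphi(x)$, so $p(\cdot,x)$ is differentiable at $0$ with $\partial_{\tau}p(0,x)=\divv\varphi(x)$, and therefore $B(\cdot,x)$ is differentiable at $0$ with $\partial_{\tau}B(0,x)=-\divv\varphi(x)$. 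For the remaining uniform statements I would use $|B(\tau,x)-B(0,x)|=|1-p(\tau,x)|/p(\tau,x)\le C'|\tau|$ for $|\tau|<\varepsilon$, uniformly in $x$ (this Lipschitz-at-$0$ bound is exactly what is invoked when the lemma is applied to differentiate $\tau\mapsto E(\mu^{\varphi,\nu}_{\tau})$ under the integral at $\tau=0$ in the proof of Theorem \ref{thm:classPME-case}); differentiability of $p(\cdot,x)$ and $B(\cdot,x)$ on all of $(-\varepsilon,\varepsilon)$ with uniformly bounded $\tau$-derivatives then follows from the same expansion once $\tau\mapsto A(\tau,x)$ is Lipschitz, i.e. once $D\varphi$ is locally Lipschitz, which holds for instance when $\varphi\in C^{2}_{b}$.

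I expect the only delicate point to be (ii): unlike in (i), the matrix at which the determinant is evaluated, $A(\tau,x)$, itself varies with $\tau$, through a map that is only continuous. The expansion $\det(\Id+\tau A)=\sum_{k}\tau^{k}e_{k}(A)$ is the device that resolves this, since it confines the $\tau$-dependence of $A(\tau,x)$ to the coefficients $e_k(A(\tau,x))$ — where continuity at $\tau=0$ suffices to identify the derivative there — and isolates all genuine $\tau$-differentiation in the scalar monomials $\tau^{k}$; everything else is uniform-in-$x$ bookkeeping of bounded polynomial coefficients.
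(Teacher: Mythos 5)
For part (i) your argument is correct and is essentially the paper's argument in a cleaner guise: the paper expands $\det(\Id+\tau D\varphi(x))$ via the Leibniz/Laplace formula and isolates the term of first order in $\tau$, which is exactly your identity $\det(\Id+\tau A)=\sum_{k}\tau^{k}e_{k}(A)$ with $e_{1}=\Tr$. The genuine difference is in (ii). The paper treats $(\tau,y)\mapsto\det D(\Id+\tau\varphi)(y)$ as a function of two variables, applies the chain rule along $\tau\mapsto(\tau,x+\tau\varphi(x))$, and notes that the $y$-partial vanishes at $\tau=0$ because every summand of $\partial_{y}\det(\Id+\tau D\varphi(y))$ carries a factor of $\tau$. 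You instead keep everything scalar: with $A(\tau,x)=D\varphi(x+\tau\varphi(x))$ you write $p(\tau,x)=\det(\Id+\tau A(\tau,x))$ and compute the difference quotient at $\tau=0$ directly, using only continuity of $D\varphi$. At $\tau=0$ your route is the more economical one, since the paper's chain rule tacitly uses differentiability of $y\mapsto D\varphi(y)$, i.e.\ $\varphi\in C^{2}$, which is not in the hypothesis (and in the applications $\varphi\in C^{1}_{c}$ only).

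You have also put your finger on a real subtlety, which you should state as a criticism of the lemma rather than as a conditional inside your own proof: for $\tau_{0}\neq0$ the map $\tau\mapsto\det\big(\Id+\tau D\varphi(x+\tau\varphi(x))\big)$ need not be differentiable at $\tau_{0}$ when $D\varphi$ is merely continuous (already in $d=1$ it equals $1+\tau\varphi'(x+\tau\varphi(x))$), so the full-interval differentiability claim in (ii) requires more regularity than $C^{1}_{b}$; the paper's ``clearly differentiable by the Leibniz formula'' glosses over exactly this point. As you correctly observe, this does not affect the downstream use of the lemma: in the proofs of Theorems \ref{thm:classPME-case} and \ref{thm1} only the derivative at $\tau=0$, together with a uniform-in-$x$ bound on the difference quotients near $0$, is needed to differentiate $\tau\mapsto E(\mu^{\varphi,\nu}_{\tau})$ under the integral at $\tau=0$, and both of these you have established under the stated hypothesis.
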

\begin{proof}
	Choose $\varepsilon>0$ such that $\det D(\Id+\tau \varphi)(x)>0$ for all $(t,x) \in (-\varepsilon,\varepsilon)\times \R^d$. This is possible, since $\varphi \in C^1_b(\R^d,\R^d)$, $\det D(\Id)(x)=1$ and since $(t,x)\mapsto \det D(\Id+\tau \varphi)(x)$ is continuous.
	\begin{enumerate}
		\item[(i)] The differentiability follows directly from the Leibniz formula. Concerning the derivative at $\tau=0$, note that in dimension $d=1$ one has
		\begin{equation*}
			\ddtau \big(\det D(\Id+\tau \varphi) (x)\big)_{|\tau=0} = \ddtau \big(1+\tau \varphi'(x)\big)_{|\tau=0} = \varphi'(x) = \divv \varphi (x).
		\end{equation*}
		Now the claim follows by induction over $d$ and by using the Laplace expansion for the determinant of a quadratic matrix. Indeed, regrouping all summands in the Laplace expansion in terms of their order in $t$, the only summand of order one is $\divv \varphi(x)$. Hence, by differentiating in $t$, the claim follows.
		\item[(ii)] Consider the map $(t,y) \mapsto \det D(\Id+\tau \varphi)(y)$. Again appealing to the Leibniz formula, clearly $(\tau,y)\mapsto \det D(\Id+\tau \varphi)(y)$ is differentiable on $(-\varepsilon,\varepsilon)\times \R^d$. Hence, for $x \in \R^d$ fixed (but arbitrary), (i) implies
		\begin{align*}
			\ddtau \big(\det D(\Id+\tau \varphi)(x+\tau \varphi(x))\big)^{-1}_{|\tau=0} &= -(1,\divv \varphi(x))\cdot (\divv \varphi (x),(\partial_y \det D(\Id+\tau \varphi)(y))_{|(\tau,y) = (0,x)})\\& = -\divv \varphi(x),
		\end{align*}
		since $(\partial_y \det D(\Id+\tau \varphi)(y))_{|(\tau,y)} = 0$. Indeed, by the Leibniz formula, $\partial_y \det D(\Id+\tau \varphi)(y)$ consists of summands of order of at least $1$ in $t$, and hence its evaluation at any point $(0,y)$ equals $0$.
	\end{enumerate}
	The final claim follows directly from the Leibniz formula.
\end{proof}

\begin{rem}\label{rem:class-PME-extend-m-less-2}
	
	The restriction $m \geq 2$ in Theorem \ref{thm:classPME-case} is only needed to apply \cite[Rem.5.4]{NLFPK-DDSDE5} in order to obtain $\frac{\nabla (u(t)^{m})}{u(t)}\in L^2(\R^d,\R^d;\mu_t)$. Without this, we cannot prove $\nabla^\Pscr E_{\mu_t} \in L^2(\R^d,\R^d;\mu_t) = T_{\mu_t}\Pscr$, i.e. we cannot prove that $\nabla^\Pscr E$ is a section along $t \mapsto \mu_t$ in the tangent bundle $T\Pscr$. But even for $m \in (1,2)$ we have $\nabla (u(t)^{m}) \in L^2(\R^d,\R^d)$ and thus $\frac{\nabla (u(t)^{m})}{u(t)} \in L_{\textup{loc}}^1(\R^d,\R^d;\mu_t)$. Then $\nabla^\Pscr E_{\mu_t}$ can be considered as the unique representing element in $L^1_{\textup{loc}}(\R^d,\R^d;\mu_t)$ of $\textup{diff}E_{\mu_t}$ on $(C^1_c(\R^d,\R^d),|\cdot|_\infty)$, and solutions to \eqref{intro:classPME} can be understood as solutions to the gradient flow in such a generalized sense.
\end{rem}

\begin{rem}
	For $d \geq 3$, the assertion and the proof of Proposition \ref{thm:classPME-case} can be extended to the case of a more general diffusivity function $\beta:r\mapsto \beta(r)$ in place of $r\mapsto |r|^{m-1}r$ in \eqref{intro:classPME}, with the following assumptions on $\beta$.
	\begin{equation*}
		\beta \in C^2(\R), \,\,\beta(0) = 0, \,\,|\beta(r)| \leq C r^\alpha, \,\,\beta'(r) \geq C|r|^{\alpha -1}
	\end{equation*}
	for $C >0$, $\alpha \geq 1$, and $\beta'$ is such that $\eta$ given below belongs to $C^1(\R)$ (which is, e.g., the case, if we have $\beta'(r) \leq C|r|^{\alpha-1}$ for some $\alpha \geq 2$). Then the energy $E$ is given by
	\begin{equation*}
		E: D(E)\subseteq \Pscr_a \to \R, \quad E(udx) := \int_{\R^d} \eta(u(x)) dx, \quad \eta(r):= \int_0^r \int_1^s \frac{\beta'(w)}{w}dw ds, r \geq 0,
	\end{equation*}
	where 
	$$D(E):= \{\nu(dx) = v(x)dx: \eta(v) \in L^1(\R^d)\}.$$
	Indeed, in this case, by \cite[Sect.5]{NLFPK-DDSDE5}, for any $\mu_0 \in \Pscr$ there is a solution $t \mapsto \mu_t(dx) = u(t,x)dx$ to \eqref{intro:classPME} (with $r\mapsto \beta(r)$ replacing $r\mapsto |r|^{m-1}r$) in $\bigcap_{\delta >0}L^\infty((\delta,\infty)\times \R^d)$ with initial datum $\mu_0$. However, here we need to assume additionally $\mu_t \in D(E)$ $dt$-a.s (which again follows if, e.g., $\beta'(r)\leq C|r|^{\alpha-1}$ for some $\alpha \geq 2$). Then a similar calculation as in the proof of Theorem \ref{thm:classPME-case} shows
	\begin{equation*}
		\text{diff}E_{\mu_t}(\varphi)  = \ddt E(\mu^{\varphi,\nu}_{\tau})_{|\tau=0}= \int_{\R^d} \big(\eta(u(t,x))-\eta'(u(t,x))u(t,x)\big)\divv \varphi (x)\,dx,\quad \forall \varphi \in C^1_c(\R^d,\R^d).
	\end{equation*}
	Since $\big(\eta(r)-\eta'(r)r\big)' = -\beta'(r)$ and since \cite[Rem.5.4]{NLFPK-DDSDE5} yields $\beta(u(t)) \in H^1(\R^d)$ $dt$-a.s., we have $dt$-a.s.
	\begin{equation*}
		\text{diff}E_{\mu_t}(\varphi) = \int_{\R^d} \nabla(\beta(u(t,x))) \cdot\varphi (x)\,dx.
	\end{equation*}
	Hence, as in Remark \ref{rem:class-PME-extend-m-less-2}, $\nabla^\Pscr E_{\mu_t} = -\frac{\nabla (\beta(u(t)))}{u(t)}$ is the unique $L_{\textup{loc}}^1(\R^d,\R^d;\mu_t)$-element representing $\textup{diff}E_{\mu_t}$ on $(C^1_c(\R^d,\R^d),|\cdot|_\infty)$, and $t\mapsto \mu_t$ can be understood as a solution to the gradient flow in this generalized sense.
\end{rem}

\subsection{Generalized porous media equation}\label{sect:main-sect:general-case}
Now we consider equation \eqref{intro:genPME}. We will prove that solutions to this equation solve \eqref{gradflow-P}
 with a weighted metric tensor $\langle \cdot, \cdot \rangle_\alpha$ (see Section \ref{subsect:diff-geom}). Consider the following set of assumptions on the coefficients $\beta, b$ and $D$.
\\

\noindent \textbf{Hypothesis 1}
\begin{itemize}
	\item[(i)] $\beta\in C^1(\R),\ \beta(0)=0,\ \gamma\le\beta'(r),\
	r\in\R,$ for $0<\gamma<\infty.  $
	\item[(i)'] $\beta\in C^1(\R),\ \beta(0)=0,\ \gamma\le\beta'(r)\le\gamma_1,\
	 r\in\R,$ for $0<\gamma<\gamma_1<\infty.  $
	\item[(ii)] $b\in C_b(\R)\cap C^1(\R), b \geq b_0 >0$.
	\item [(iii)] $\Phi \in C^1(\R^d)$, $\nabla \Phi \in C_b(\R^d,\R^d)$, $D = -\nabla \Phi$.
		\item [(iv)] $(\divv D)^- \in L^\infty(\R^d)$ and ${\rm div}\,D\in  (L^2(\R^d)+L^\infty(\R^d))$.	
	\item[(v)]$\Phi \in W^{2,1}_{\rm loc}(\R^d),\ \Phi\ge1,$ $\lim\limits_{|x|\to\infty}\Phi(x)=+\infty$ and there exists $m\in[2,\infty)$ such that $\Phi^{-m}\in L^1(\R^d)$.
\end{itemize}
First, we assume (i), (ii), (iii). The ansatz for the following energy functional comes again from Section \ref{subsect:lifting-reveals}. Set 
\begin{equation}\label{def:eta}
	\eta: \R_+ \to \R, \quad \eta(r):= \int_0^r g(s)ds := \int_0^r \int_1^s\frac{\beta'(w)}{wb(w)}dw \, ds,
\end{equation}
and
\begin{equation}\label{def:E}
	E: \tilde{D}(E)\subseteq \Pscr_a \to \R \cup \{+\infty\}, \quad E(vdx):= \int_{\R^d} \eta(v(x))\, dx + \int_{\R^d} \Phi(x)\,v(x)dx,
\end{equation}
where $\tilde{D}(E) := \{\nu(dx) = v(x)dx  \in \Pscr_a \,|\,\Phi \in L^1(\R^d;\nu)\}$. 
Note that
by (i) and (ii)
we have for $r \geq0$ 
\begin{equation*}
	\frac{\gamma_1}{b_0}\,\mathds{1}_{[0,1]}(r)r(\log r-1)
	+\frac{\gamma}{|b|_\infty}\,\mathds{1}_{(1,\infty)}(r)
	r(\log r-1)\le\eta(r)
	\le \frac{\gamma}{|b|_\infty}\,\mathds{1}_{[0,1]}(r)r(\log r-1)+
	\frac{\gamma_1}{b_0}\,\mathds{1}_{(1,\infty)}(r)r(\log r-1),
\end{equation*}
which yields the second equality in the next line
\begin{equation*}
	D(E)= \tilde{D}(E)\cap \{\nu(dx) = v(x)dx \in \Pscr_a\,|\,E(v(x)dx) < +\infty\} = \tilde{D}(E)\cap \{\nu(dx) = v(x)dx \in \Pscr_a\,|\, v \log v \in L^1(\R^d)\}.
\end{equation*}
For the main results of this section, Theorems \ref{thm1} and \ref{prop:main-appl-general-case}, we consider the weight $\alpha$,
\begin{equation}\label{def:weighted-tensor1}
	\alpha: \Pscr \to \Bscr_b(\R^d), \quad \alpha (\nu):= \begin{cases}
		x\mapsto \frac{1}{b(v(x))}&,\text{ if }\nu \in \Pscr_a, \nu(dx)= v(x)dx,
		\\1&,\text{ else,}
		\end{cases}
\end{equation}
 and we denote the corresponding metric tensor and gradient by $\langle \cdot, \cdot \rangle_b$ and $\nabla^\Pscr_b$, i.e. 
\begin{equation}\label{def:weighted-tensor2}
\langle \cdot, \cdot \rangle_b : \nu \mapsto \langle\cdot,\cdot \rangle_{b,\nu} = \bigg\langle \frac{1}{b(v(x))}\cdot, \cdot \bigg\rangle_{\nu}
\end{equation} 
for $\nu(dx) = v(x)dx \in \Pscr_a$, and $\langle \cdot, \cdot \rangle_{b,\nu} = \langle \cdot, \cdot \rangle_{\nu}$ if $\nu \in \Pscr \backslash \Pscr_a$. For $G: D(G)\subseteq \Pscr \to \R$ and $\nu \in D(\text{diff}G)$, $\nabla^\Pscr_bF_{\nu}$ denotes the unique element in $L^2(\R^d,\R^d;\nu)$ such that $\text{diff} F_\nu(\varphi) = \langle \nabla^{\Pscr}_bF_{\nu},\varphi \rangle_{b,\nu}$ for all $\varphi \in C^1_c(\R^d,\R^d)$.
We introduce the set
$$D'_{\text{nice}}:= \bigg\{\nu(dx) = v(x)dx \in \Pscr \,|\, v \in L^\infty(\R^d)\cap W^{1,1}_{\text{loc}}(\R^d), \frac{\nabla v}{ v} \in L^2(\R^d,\R^d;\nu)\bigg\},$$
and we will prove $D'_{\text{nice}} \cap D(E)\subseteq D(\text{diff}E)$ under the assumptions of Hypothesis 1 (see Theorem \ref{prop:main-appl-general-case}).

\begin{rem}
	For the linear special case of \eqref{intro:genPME} being the heat equation, i.e. $\beta(r)=r$ and $D= b= 0$, one has 
	$$D(E) = \big\{v(x)dx \in \Pscr_a \,|\, v \log v \in L^1(\R^d)\big\},\quad E(vdx) = \int_{\R^d} v(x)(\log v(x)-1) \,dx.$$
	Hence, in this case, $E$ is just the classical Boltzmann entropy function. The same energy function $E$ for the heat equation was also formally obtained in \cite{Otto01}.
\end{rem}

\begin{theorem}\label{thm1}
Suppose (i)', (ii), (iii) of Hypothesis 1 hold. Set $D_0:= D'_{\text{nice}}\cap D(E)$. Let	$ t\mapsto \mu_t(dx) = u(t,x)dx$ be a solution to \eqref{intro:genPME} with initial datum $\mu_0 \in \Pscr$ such that $\mu_t \in D_0$ for $dt$-a.e. $t >0$.
Then $t\mapsto \mu_t$ solves \eqref{gradflow-P} with $E$ as in \eqref{def:E} and with the weighted metric tensor $\langle \cdot, \cdot \rangle_b$ from \eqref{def:weighted-tensor2}. We have $D_0\subseteq D(\text{diff}E)$ and the weighted gradient of $E$ for $\nu(dx) = v(x)dx \in D_0$
is
\begin{equation}\label{aux100}
	\nabla^\Pscr_b E_\nu = \frac{\nabla(\beta(v))}{v}-b(v)D = b(v)\nabla\big(g(v)+\Phi\big)
\end{equation}
in $(L^2(\R^d,\R^d;\nu),\langle \cdot, \cdot \rangle_{b,\nu})$.
\end{theorem}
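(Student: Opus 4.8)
The plan is to follow the heuristic of Section \ref{subsect:lifting-reveals} and make every step rigorous, using the regularity $\mu_t \in D_0$ as the substitute for the smoothness that was ignored there. First I would establish the inclusion $D_0 \subseteq D(\text{diff}E)$ together with an explicit formula for $\text{diff}E_\nu$. For $\nu(dx) = v(x)dx \in D_0$ and $\varphi \in C^1_c(\R^d,\R^d)$, I would use \eqref{eq:density-pushforward} to write the density of $\mu^{\varphi,\nu}_\tau$ and split $E(\mu^{\varphi,\nu}_\tau)$ into the entropy part $\int_{\R^d}\eta(\tfrac{d\mu^{\varphi,\nu}_\tau}{dx})\,dx$ and the potential part $\int_{\R^d}\Phi\,d\mu^{\varphi,\nu}_\tau = \int_{\R^d}\Phi(x+\tau\varphi(x))v(x)\,dx$. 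For the potential part, differentiating under the integral sign at $\tau = 0$ (justified since $\nabla\Phi \in C_b$ by (iii), $\varphi$ has compact support, and $\nu$ is a probability measure) gives $\int_{\R^d}\nabla\Phi(x)\cdot\varphi(x)\,v(x)\,dx = -\int_{\R^d}D\cdot\varphi\,d\nu$. For the entropy part I would mimic the computation in the proof of Theorem \ref{thm:classPME-case}: using the substitution rule and Lemma \ref{lem:det-calc} to differentiate the Jacobian factors, together with $v \in L^\infty$ to dominate, I expect to obtain $\ddtau[\cdot]_{|\tau=0} = \int_{\R^d}\big(\eta(v)-\eta'(v)v\big)\divv\varphi\,dx$; then, since $(\eta(r)-\eta'(r)r)' = -\eta''(r)r = -g'(r)r = -\tfrac{\beta'(r)}{b(r)}$ and since $v \in W^{1,1}_{\text{loc}}$ with $\tfrac{\nabla v}{v} \in L^2(\R^d,\R^d;\nu)$, an integration by parts (after a cutoff/approximation argument to handle the local-Sobolev regularity) converts this into $\int_{\R^d}\tfrac{\beta'(v)}{b(v)}\tfrac{\nabla v}{v}\cdot\varphi\,v\,dx = \int_{\R^d}\tfrac{1}{b(v)}\tfrac{\nabla(\beta(v))}{v}\cdot\varphi\,d\nu$. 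Combining the two parts, $\text{diff}E_\nu(\varphi) = \big\langle \tfrac{1}{b(v)}\big(\tfrac{\nabla(\beta(v))}{v} - b(v)D\big),\varphi\big\rangle_\nu = \big\langle \tfrac{\nabla(\beta(v))}{v} - b(v)D,\varphi\big\rangle_{b,\nu}$, which is $L^2(\R^d,\R^d;\nu)$-continuous precisely because $\tfrac{\nabla(\beta(v))}{v} = \tfrac{\beta'(v)\nabla v}{v}$ lies in $L^2(\R^d,\R^d;\nu)$ (using $\beta'$ bounded by (i)$'$ and $\tfrac{\nabla v}{v} \in L^2(\nu)$) and $b(v)D$ is bounded. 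This proves $D_0 \subseteq D(\text{diff}E)$ and, by the defining property of the weighted gradient, identifies $\nabla^\Pscr_b E_\nu = \tfrac{\nabla(\beta(v))}{v} - b(v)D$; the second form in \eqref{aux100} follows since $\tfrac{\nabla(\beta(v))}{v} = b(v)\tfrac{\beta'(v)\nabla v}{v b(v)} = b(v)\nabla(g(v))$ (chain rule, as $g' = \tfrac{\beta'(r)}{r b(r)}$) and $-b(v)D = b(v)\nabla\Phi$.

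Next I would verify the gradient-flow equation itself. Since $t\mapsto\mu_t$ is a solution to \eqref{intro:genPME} in the sense of Definition \ref{d6.1} (equivalently Definition \ref{def:distr-sol-gen-NLFPKE} with $a_{ij} = \delta_{ij}\tfrac{\beta(u)}{u}$, $b_i = D^{(i)}b(u)$), there is a $dt$-null set outside of which $t\mapsto\mu_t(\zeta)$ is differentiable for every $\zeta \in C^2_c(\R^d)$ with $\ddtau\mu_\tau(\zeta)_{|\tau=t} = \int_{\R^d}\tfrac{\beta(u(t,x))}{u(t,x)}\Delta\zeta + b(u(t,x))D\cdot\nabla\zeta\,d\mu_t$. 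Intersecting this null set with the one where $\mu_t \in D_0$, I would, for such $t$, rewrite the right-hand side as $-\langle \nabla^\Pscr_b E_{\mu_t}, \nabla\zeta\rangle_{b,\mu_t}$: indeed $-\langle \tfrac{1}{b(u)}(\tfrac{\nabla(\beta(u))}{u} - b(u)D), \nabla\zeta\rangle_{u\,dx} = -\int \tfrac{\nabla(\beta(u))}{u}\cdot\nabla\zeta\,u\,dx + \int D\cdot\nabla\zeta\,u\,dx = \int \beta(u)\Delta\zeta\,dx + \int b(u)D\cdot\nabla\zeta\,u\,dx$ after one more integration by parts (legitimate since $\beta(u(t)) = \beta(u(t)) - \beta(0)$ and $\tfrac{\nabla(\beta(u))}{u} \in L^2(u\,dx) \subseteq L^1_{\text{loc}}(dx)$, with $\zeta$ compactly supported). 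This is exactly $-\text{diff}E_{\mu_t}(\alpha(\mu_t)^{-1}\nabla\zeta)$, the right-hand side of \eqref{gradflow-P-single-h} with weight $\alpha$ given by \eqref{def:weighted-tensor1}, so by the equivalence established there between \eqref{gradflow-P} and \eqref{gradflow-P-single-h}, $t\mapsto\mu_t$ solves \eqref{gradflow-P}.

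The main obstacle I anticipate is the rigorous differentiation of the entropy functional $\tau \mapsto \int_{\R^d}\eta\big(\tfrac{d\mu^{\varphi,\nu}_\tau}{dx}\big)\,dx$ at $\tau = 0$ and the attendant integration by parts. Unlike the classical PME case, here $\eta$ is only defined via the nested integral \eqref{def:eta} and has logarithmic-type growth (as recorded in the displayed sandwich bound before Theorem \ref{thm1}), so the domination argument for differentiation under the integral needs the bound (i)$'$ on $\beta'$ (this is presumably why (i)$'$ rather than (i) is assumed), the boundedness of $b$ away from $0$ and $\infty$ from (ii), and $v \in L^\infty$; one must also check that the boundary terms in the integration by parts vanish, which requires exhausting $\R^d$ by balls and using $\tfrac{\nabla v}{v} \in L^2(\nu)$ together with $\beta'(v)v \in L^1_{\text{loc}}$ to pass to the limit. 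A secondary technical point is the passage from the "$\eta(v) - \eta'(v)v$" form of $\text{diff}E_\nu$ to the "$\tfrac{\nabla(\beta(v))}{v}$" form, which is exactly such an integration by parts and should be done carefully with the local-Sobolev regularity of $v$ only (multiply $\varphi$ by a cutoff, integrate by parts, let the cutoff increase to $1$). Everything else — the potential term, the chain-rule identities $\tfrac{\nabla(\beta(v))}{v} = b(v)\nabla(g(v))$, and the final matching with the weak formulation — is routine once these two points are settled.
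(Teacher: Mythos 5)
Your proposal is correct and follows essentially the same route as the paper's proof: split $E(\mu^{\varphi,\nu}_\tau)$ into entropy and potential parts, differentiate the Jacobian factors via Lemma \ref{lem:det-calc} to get the $\int(\eta(v)-\eta'(v)v)\divv\varphi\,dx$ form, use $(\eta(r)-\eta'(r)r)'=-\beta'(r)/b(r)$ and integration by parts (with $v\in W^{1,1}_{\text{loc}}$) to identify $\text{diff}E_\nu$, deduce $L^2(\nu)$-continuity from $\tfrac{\nabla v}{v}\in L^2(\nu)$, and then match with the weak formulation of \eqref{intro:genPME} via \eqref{gradflow-P-single-h}. The only small step you leave implicit that the paper spells out is the verification that $\mu^{\varphi,\nu}_\tau\in D(E)$ for small $|\tau|$ (i.e., $\Phi\in L^1(\mu^{\varphi,\nu}_\tau)$ and $vj\log(vj)\in L^1$ for the perturbed density), which is needed before one can even form the difference quotient; this follows routinely from \eqref{eq:density-pushforward}, Lemma \ref{lem:det-calc} and $v\log v\in L^1$.
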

\begin{proof}
For $t \mapsto \mu_t(dx) = u(t,x)dx$ as in the assertion we have for all $\zeta \in C^2_c(\R^d)$ and $dt$-a.e. $t > 0$ (with exceptional set independent of $\zeta$)
\begin{equation*}
	{\ddtau\mu_\tau(\zeta)}_{|\tau=t} = \int_{\R^d}\frac{\beta(u(t,x))}{u(t,x)}\Delta \zeta +D(x)b(u(t,x))\cdot \nabla \zeta \,d\mu_t(x).
\end{equation*}
Hence, by Section \ref{subsect:gradflow-P}, it remains to prove
\begin{equation}\label{aux11}
D'_{\text{nice}}\cap D(E)	\subseteq D(\text{diff}E),
\end{equation}
\begin{equation}\label{proof:show-this-then-done}
-\text{diff}E_{\nu}(b(v) \varphi) = -\int_{\R^d}\bigg(\frac{\nabla \beta(v(x))}{v(x)} -D(x)b(v(x))\bigg)\cdot \varphi\,d\nu(x)
\end{equation}
for every $\varphi \in C^1_c(\R^d,\R^d)$ and $\nu \in v(x)dx \in D'_{\text{nice}}\cap D(E)$. 
Concerning \eqref{aux11}, let $\nu =v(x)dx \in D'_{\text{nice}}\cap D(E)$. Recall that for $\varphi \in C^1_c(\R^d,\R^d)$ and $|\tau| < \varepsilon = \varepsilon(\varphi) >0$, $\mu^{\varphi,\nu}_\tau = \nu \circ (\Id+\tau \varphi)^{-1}$ is absolutely continuous with density as in \eqref{eq:density-pushforward}.
Moreover, $\mu^{\varphi, \nu}_\tau \in D(E)$ for $|\tau|<\varepsilon$ (with $\varepsilon>0$ as above). Indeed,
	\begin{align*}
		\int_{\R^d}|\Phi(x+\tau \varphi(x))| v(x)dx = &\int_{(\supp \varphi)^c}|\Phi(x)| v(x)dx \\&+ \int_{(\Id+\tau \varphi)(\supp \varphi)}|\Phi(x)| v(\Id+\tau \varphi)^{-1}(x))|\det D(\Id+\tau \varphi)(x)|^{-1} dx,
	\end{align*}
and both summands on the right-hand side are finite, since $v\in D(E) \cap L^\infty(\R^d)$, $\Phi \in L^1_{\text{loc}}(\R^d)$, and due to Lemma \ref{lem:det-calc}. Hence $\mu^{\varphi, \nu}_\tau \in \tilde{D}(E)$, so by the transformation rule it remains to show, abbreviating $j(\tau,x):= |\det D(\Id+\tau \varphi)(x+\tau \varphi (x))|^{-1}$, that $v(x)j(\tau,x)\log \big(v(x)j(\tau,x)\big) \in L^1(\R^d)$. But this follows, since $j(\tau,x)$ is uniformly in $(\tau,x) \in (-\varepsilon,\varepsilon)\times \R^d$ contained in an interval $(1-\delta, 1+\delta)$, where (after decreasing $\varepsilon$ if necessary) $\delta = \delta(\varepsilon) \in (0,1/2)$, and hence
\begin{align*}
	\int_{\R^d} v(x)j(\tau,x)|\log \big(v(x)j(\tau,x)\big)| \,dx \leq 2\int_{\R^d} v(x) (|\log v(x)| + \log 2)\,dx <\infty.
\end{align*}
Moreover, $\tau \mapsto E(\mu^{\varphi,\nu}_\tau)$ is differentiable in $\tau =0$, since by the transformation rule
\begin{align*}
E(\mu^{\varphi,\nu}_\tau) &\notag =\bigg[\int_{\R^d} \eta\bigg(\frac{d\mu^{\varphi,\nu}_\tau}{dx}(x) \bigg)\, dx + \int_{\R^d} \Phi(x)\,\bigg(\frac{d\mu^{\varphi,\nu}_\tau}{dx}(x) \bigg)dx\bigg]
	\\& \notag =\bigg[\int_{\R^d} \eta\bigg(v(x)|\det D(\Id+\tau\varphi)(x+\tau \varphi(x))|^{-1}\bigg)|\det D(\Id+\tau \varphi)(x)|\, dx\\& \notag \quad \quad \quad \quad \quad \quad \quad  + \int_{\R^d} \Phi(x+\tau \varphi(x))\,d\nu(x)\bigg]
	\\& =: I_1(\tau)+I_2(\tau),
\end{align*}
and the integrands of $I_i(\tau), i \in \{1,2\}$, are differentiable in $\tau \in (-\varepsilon,\varepsilon)$ for all $x \in \R^d$ with derivative uniformly $L^1(\R^d)$-, and $L^1(\R^d;\nu)$-bounded in $\tau$, respectively. Indeed, for $I_2$, this follows immediately from $\Phi \in C^1(\R^d)$ and the boundedness of $\nabla \Phi$ and $\varphi$. For $I_1$, the claim follows by Lemma \ref{lem:det-calc}, $v \in L^\infty(\R^d)$ and the local boundedness of $\eta'$.
Therefore, we obtain
\begin{align}\label{calc-long-with-regu}
	\notag	\text{diff}E_{\nu}(\varphi) &= \ddtau E(\mu^{\varphi,\nu}_\tau)_{|\tau =0} \notag =\ddtau \bigg[\int_{\R^d} \eta\bigg(\frac{d\mu^{\varphi,\nu}_\tau}{dx}(x) \bigg)\, dx + \int_{\R^d} \Phi(x)\frac{d\mu^{\varphi,\nu}_\tau}{dx}(x) dx\bigg]_{|\tau =0}
	\\& \notag =\ddtau \bigg[\int_{\R^d} \eta\bigg(v(x)|\det D(\Id+\tau\varphi)(x+\tau \varphi(x))|^{-1}\bigg)|\det D(\Id+\tau \varphi)(x)|\, dx\\& \quad \quad \quad \quad \quad \quad \quad  + \int_{\R^d} \Phi(x+\tau \varphi(x))\,d\nu(x)\bigg]_{|\tau =0}.
\end{align}
First note 
$$\ddtau\bigg[\int_{\R^d} \Phi(x+\tau \varphi(x))\,d\nu(x)\bigg]_{|\tau=0} = \int_{\R^d}\nabla \Phi \cdot \varphi (x)\,d\nu(x) = -\int_{\R^d}D(x) \cdot \varphi (x)\,d\nu(x).$$
Concerning the other summand in \eqref{calc-long-with-regu}, we find
\begin{align*}
&\quad\,\,\ddtau \bigg[\int_{\R^d} \eta\bigg(v(x)|\det D(\Id+\tau\varphi)(x+\tau \varphi(x))|^{-1}\bigg)|\det D(\Id+\tau \varphi)(x)|\, dx\bigg]_{|\tau =0}
\\&= \int_{\R^d} \eta'(v(x)) \bigg(\ddtau \bigg|\det D(\Id+\tau\varphi)(x+\tau \varphi(x))\bigg|^{-1}\bigg)_{|\tau =0}v(x)\\&\quad \quad \quad \quad  +\eta(v(x))\ddtau |\det D(\Id+\tau \varphi)(x)|_{|\tau =0} \,dx 
\\& = \int_{\R^d} \bigg(\eta (v(x))-\eta'(v(x))v(x)\bigg)\divv \varphi (x)\,dx,
\end{align*}
where we used Lemma \ref{lem:det-calc} for the final equality.
Since $v \in W^{1,1}_{\text{loc}}(\R^d)$ and since
$\eta (r)-\eta'(r)r$ is differentiable  with derivative
$= -\frac{\beta'(r)}{b(r)},$
by the divergence theorem we obtain
\begin{align*}
\int_{\R^d} \bigg(\eta (v(x))-\eta'(v(x))v(x)\bigg)\divv \varphi (x)\,dx = 	\int_{\R^d} \frac{\beta'(v(x))\nabla v(x)}{b(v(x))}\cdot \varphi \,dx  = \int_{\R^d} \frac{\beta'(v(x))\nabla v(x)}{v(x)b(v(x))}\cdot \varphi \,d\nu(x).
\end{align*}
Hence, since $\frac{\beta'(v)\nabla v}{ v} \in L^2(\R^d,\R^d;\nu)$, the map
\begin{equation*}
\varphi \mapsto	\text{diff}E_{\nu}(\varphi)  =  \int_{\R^d} \bigg(\frac{\beta'(v(x))\nabla v(x)}{v(x)b(v(x))}-D(x)\bigg)\cdot \varphi(x)\,d\nu(x)
\end{equation*}
is linear and continuous on $C^1_c(\R^d,\R^d)$ with respect to the $L^2(\R^d,\R^d;\nu)$-topology,
Thus we have shown \eqref{aux11}-\eqref{proof:show-this-then-done} and the proof is complete.
\end{proof}

Now assume (i) and (ii)-(v) of Hypothesis 1. In this case, \eqref{intro:genPME} has a unique bounded solution $t\mapsto u(t,x)dx$ for every initial datum $\mu_0 \in L^\infty(\R^d)\cap D(E)$, and, as we will show, this solution belongs to $D'_{\text{nice}}\cap D(E)$. Consequently, Theorem \ref{thm1} applies. More precisely, we have Theorem \ref{prop:main-appl-general-case} below. 

\begin{rem}
We point out that the assumption $\beta'\leq \gamma_1$ from (i)' of Hypothesis 1 is not needed here, since $\beta'(u)$ is bounded due to the boundedness of $u$, even if $\beta'$ is only locally bounded. Moreover, in this case we obtain a uniqueness result for the gradient flow, comparable to Theorem \ref{thm:classPME-case}, since for solutions in $L^1_{\text{loc}}([0,\infty)\times \R^d)$ all integrability conditions from Definition \ref{d6.1} are fulfilled. We stress that for the case of bounded $u$, Theorem \ref{thm1} also holds when (i)' is assumed instead of (i).
\end{rem}

\begin{theorem}\label{prop:main-appl-general-case}
	Suppose (i) and (ii)-(v) of Hypothesis 1 are satisfied and let $\mu_0 \in L^\infty(\R^d)\cap D(E)$.
	\begin{enumerate}
		\item [(i)] There is a unique solution to $t\mapsto \mu_t(dx) = u(t,x)dx$ \eqref{intro:genPME} in $L^\infty([0,\infty)\times \R^d$ with initial datum $\mu_0$, and it solves \eqref{gradflow-P} with $E$ from \eqref{def:E} and weighted metric tensor $\langle \cdot, \cdot \rangle_b$. We have $D'_{\text{nice}}\cap D(E)\subseteq D(\text{diff}E)$, and the formula for $\nabla_b^\Pscr E_\nu$ for $\nu \in D'_{\text{nice}}\cap D(E)$ is as in \eqref{aux100}.
		\item [(ii)] Moreover, the solution from (i) is the unique solution to this gradient flow in $L^\infty([0,\infty)\times \R^d) \cap D'_{\text{nice}}\cap D(E)$ with initial datum $\mu_0$.
		\item[(iii)] If $D = b= 0$ (i.e. $\langle \cdot, \cdot \rangle_{b}$ is the standard $L^2$-metric tensor $\langle \cdot, \cdot \rangle$), $t\mapsto \mu_t$ as in (i) satisfies
		\begin{equation*}
			\int_0^T |\nabla^\Pscr E_{\mu_t}|^2_{T_{\mu_t}\Pscr}\,dt < +\infty,\quad \forall T>0.
		\end{equation*}
	\end{enumerate}

\end{theorem}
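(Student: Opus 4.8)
\emph{Overall plan.}
The intention is to reduce everything to Theorem~\ref{thm1}. As the remark preceding the statement records, Theorem~\ref{thm1} stays valid with Hypothesis~1(i) in place of (i)' once one works with a \emph{bounded} solution $u$, so the only real work in (i) will be to check that the bounded solution of \eqref{intro:genPME} lies in $D_0=D'_{\text{nice}}\cap D(E)$ for $dt$-a.e.\ $t$. For this I would import from the well-posedness theory for \eqref{intro:genPME} under Hypothesis~1(i),(ii)--(v) three facts: (1) existence and uniqueness of a solution $t\mapsto\mu_t(dx)=u(t,x)\,dx$ with $u\in L^\infty([0,\infty)\times\R^d)$ and $\mu_0$ attained weakly; (2) for $dt$-a.e.\ $t$, $u(t)$ has a weak spatial gradient with the entropy-dissipation bound $\int_0^T\!\int_{\R^d}\frac{\beta'(u)^2}{u\,b(u)}|\nabla u|^2\,dx\,dt<\infty$ for all $T>0$ (equivalently $\sqrt{u(t)}\in H^1(\R^d)$ for a.e.\ $t$, with an $L^2_t$-bound); and (3) propagation of the confining moment $\sup_{t\le T}\mu_t(\Phi)<\infty$, which is precisely where Hypothesis~1(iv),(v) enters: testing \eqref{intro:genPME} against a truncation of $\Phi$ and using $D\cdot\nabla\Phi=-|\nabla\Phi|^2\le0$, $b\ge0$, $\Delta\Phi=-\divv D$ with $(\divv D)^-\in L^\infty$ and $\divv D\in L^2+L^\infty$, and $\beta(u)\in L^1\cap L^\infty$, one gets $\ddt\mu_t(\Phi)\le C$, while $\Phi\ge1$, $\Phi^{-m}\in L^1$ let one pass to the limit in the truncation.

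\emph{$D_0$-membership, and part (i).}
I would then fix a $t$ for which the bounds of (2) hold and $\mu_t(\Phi)<\infty$. Since $\beta'\ge\gamma>0$ (hence $\beta'(u)^2\ge\gamma^2$) and $b\le|b|_\infty$, (2) gives $\int_{\R^d}\frac{|\nabla u(t)|^2}{u(t)}\,dx<\infty$, i.e.\ $\frac{\nabla u(t)}{u(t)}\in L^2(\R^d,\R^d;\mu_t)$, and by Cauchy--Schwarz on balls also $u(t)\in W^{1,1}_{\text{loc}}(\R^d)$; together with boundedness this is $\mu_t\in D'_{\text{nice}}$. For $\mu_t\in D(E)=\tilde D(E)\cap\{v\log v\in L^1\}$: $\mu_t(\Phi)<\infty$ gives $\mu_t\in\tilde D(E)$, and it remains to see $u(t)\log u(t)\in L^1(\R^d)$; on $\{u(t)>1\}$ this is $\le\log\|u(t)\|_\infty<\infty$ (a probability density), and on $\{u(t)\le1\}$ I would use $r|\log r|\le C_m\,r^{m/(m+1)}$ on $(0,1]$ and Hölder (exponents $\tfrac{m+1}{m}$, $m+1$) to obtain
\begin{equation*}
	\int_{\{u(t)\le1\}}u(t)|\log u(t)|\,dx\ \le\ C_m\,\mu_t(\Phi)^{\frac{m}{m+1}}\Big(\int_{\R^d}\Phi^{-m}\,dx\Big)^{\frac{1}{m+1}}\ <\ \infty,
\end{equation*}
with $m\ge2$ as in Hypothesis~1(v) and $\Phi\ge1$ used so that the last factor is exactly the finite quantity from (v). Thus $\mu_t\in D_0$ for $dt$-a.e.\ $t$, Theorem~\ref{thm1} (via the extending remark) applies, and part~(i) follows — including $D'_{\text{nice}}\cap D(E)\subseteq D(\text{diff}E)$ and formula \eqref{aux100} for $\nabla^\Pscr_b E_\nu$.

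\emph{Parts (ii) and (iii).}
For (ii), I would take an arbitrary $t\mapsto\nu_t=v_t\,dx$ in $L^\infty([0,\infty)\times\R^d)\cap D'_{\text{nice}}\cap D(E)$ solving \eqref{gradflow-P} with this $E$, weight $\alpha=b$ and initial datum $\mu_0$, insert \eqref{aux100} (valid on $D'_{\text{nice}}\cap D(E)$ by (i)) into \eqref{gradflow-P-single-h} using \eqref{eq:alpha-grad}, and integrate by parts once (using $v_t\in W^{1,1}_{\text{loc}}$); the outcome is exactly the distributional formulation of \eqref{intro:genPME}, and since $\nu_t\in L^\infty\subseteq L^1_{\text{loc}}$ all integrability requirements of Definition~\ref{d6.1} hold, so $\nu$ is a bounded solution and $\nu=\mu$ by the uniqueness in (1). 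For (iii), when $\langle\cdot,\cdot\rangle_b=\langle\cdot,\cdot\rangle$, \eqref{aux100} reads $\nabla^\Pscr E_{\mu_t}=\frac{\nabla\beta(u(t))}{u(t)}$, so $|\nabla^\Pscr E_{\mu_t}|^2_{T_{\mu_t}\Pscr}=\int_{\R^d}\frac{\beta'(u(t))^2}{u(t)}|\nabla u(t)|^2\,dx$; integrating in $t$ and invoking the dissipation bound from (2) (equivalently the $L^2_tH^1_x$ parabolic regularity already used for Theorem~\ref{thm:classPME-case}(iv)) yields $\int_0^T|\nabla^\Pscr E_{\mu_t}|^2_{T_{\mu_t}\Pscr}\,dt<\infty$.

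\emph{Expected main obstacle.}
The crux is (2)--(3): securing, in the present generality (a genuine nonlinear transport perturbation and only $\beta'\ge\gamma$, with $\beta'$ not assumed bounded), both the $\sqrt u\in H^1$-type entropy dissipation and the propagation of $\mu_t(\Phi)$ — either by citing the relevant well-posedness/regularity results and checking the hypotheses line up, or by a direct approximation argument (regularising $\beta$ and $\log$, truncating $\Phi$, where the uniform-in-truncation control is the delicate point and where $(\divv D)^-\in L^\infty$ and $\divv D\in L^2+L^\infty$ are tailored to help). Once that is in hand the gradient-flow identification is essentially a corollary of Theorem~\ref{thm1}; a last small point to pin down is that the proof of Theorem~\ref{thm1} really does go through verbatim under Hypothesis~1(i) for bounded densities, as claimed in the remark preceding the statement.
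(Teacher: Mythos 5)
Your proposal follows essentially the same route as the paper: reduce to Theorem \ref{thm1} (valid under Hypothesis 1(i) for bounded densities), import existence/uniqueness and the regularity facts ($W^{1,1}_{\text{loc}}$, the entropy-dissipation bound giving $\nabla u(t)/u(t)\in L^2(\mu_t)$, and propagation of $\mu_t(\Phi)$) from the well-posedness theory — the paper cites \cite{BR-IndianaJ} (Prop.~2.2, Thm.~4.1, Eq.~(4.7)) and \cite{BR23} (Cor.~3.5) for exactly these — and then deduce (ii) by converting a gradient-flow solution back into an FPK solution and invoking uniqueness, and (iii) from the dissipation estimate. Your additional explicit verification of $u\log u\in L^1$ via $\Phi^{-m}\in L^1$ and Hölder is a correct filling-in of a step the paper delegates to the citation, so this is the same proof with slightly more detail rather than a different one.
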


\begin{proof}
	\begin{enumerate}
		\item [(i)] 	The existence assertion follows from \cite[Prop.2.2]{BR-IndianaJ} (there it is proven that such solutions exist for every $\mu_0 \in L^1(\R^d)\cap L^\infty(\R^d)$). The uniqueness follows from \cite[Cor.3.5]{BR23}. Hence it remains to prove $\mu_t \in D'_{\text{nice}}\cap D(E)$ $dt$-a.s. for this solution $t\mapsto \mu_t(dx) = u(t,x)dx$. First, $\mu_t \in D(E)\cap W^{1,1}_{\text{loc}}(\R^d)$ $dt$-a.s. follows from \cite[Thm.4.1]{BR-IndianaJ}. Furthermore, by \cite[Thm.4.1.,Eq.(4.7)]{BR-IndianaJ} and since $b(u(t))D \in L^\infty(\R^d,\R^d)$, the triangle inequality and $u(t) \in L^1(\R^d)$ yield $\beta'(u(t))\nabla u(t) (u(t))^{-1} \in L^2(\R^d,\R^d;\mu_t)$ $dt$-a.s.
		\item[(ii)] It is easily seen that any such solution to the gradient flow is a solution to the FPK equation. Hence the claim follows from the restricted uniqueness result in (i).
		\item[(iii)] This is proven analogously to Theorem \ref{thm:classPME-case} (iv).
	\end{enumerate}
\end{proof}

\begin{rem}[Stationary solutions to \eqref{intro:genPME}]
If in addition to Hypothesis 1 also the "balance condition"
	\begin{equation}
	\gamma_1 \Delta \Phi(x)-b_0|\nabla \Phi(x)|^2 \leq 0, \quad dx-\text{a.s.},
	\end{equation}
holds,
	 it was shown in \cite{BR-IndianaJ} that $E$ from \eqref{def:E} is a Lyapunov function for the solution $t \mapsto \mu_t$, i.e. $E(\mu_t)\leq E(\mu_s)$ for all $ 0 \leq s \leq t$. Moreover, it is proven there that $\mu_t$ converges to an equilibrium $u_\infty$ in $L^1(\R^d)$ as $t \to \infty$, which is a stationary solution to \eqref{intro:genPME} and can be calculated from $E$ via
	\begin{equation*}
		u_\infty(x) = g^{-1}\big(-\Phi + c\big),
	\end{equation*}
where $g(r) = \eta'(r)$ with $\eta$ from \eqref{def:eta} and $c \in \R$ is a uniquely determined constant. A similar result for degenerate diffusivities $\beta$ was obtained in \cite{BR22-invar-pr}. That $t\mapsto E(\mu_t)$ is decreasing is, at least heuristically, also implied by Remark \ref{rem:decreasing-along-E}. This suggests that our identification of the energy $E$ provides an ansatz to find Lyapunov functions --- and hence stationary solutions --- in more general cases than those covered in \cite{BR-IndianaJ}.
\end{rem}
\begin{rem}
	We would like to point out that if $\beta(r) = \sigma r $, $\sigma \in (0,\infty)$ and $b(r) = b_0 \in (0,\infty)$, then $\eta(r) = \frac{\sigma}{b_0}r(\log r - 1)$, $r \geq 0$, i.e. in this case $E$ in \eqref{def:E} is the classical Boltzmann entropy function.
\end{rem}

To conclude this subsection, we want to prove that in the situation of Theorem \ref{thm1} for each fixed $\nu(dx) = v(x)dx \in D'_{\text{nice}}$, the gradient field $\nabla^\Pscr_bE_\nu$ can be approximated by weighted smooth gradient fields in $L^2(\R^d,\R^d;\nu)$. To this end, we define
\begin{equation}
	G^\nu := \text{closure of } \big\{b(v)\nabla \zeta\,\big|\, \zeta \in C^\infty_c(\R^d)\big\}\text{ in }(L^2(\R^d,\R^d;\nu),\langle \cdot, \cdot \rangle_{b,\nu})
\end{equation}
(see \eqref{def:weighted-tensor2}). Then $T_\nu\Pscr = L^2(\R^d,\R^d;\nu) = G^\nu \oplus (G^\nu)^{\perp,\langle \cdot, \cdot\rangle_{b,\nu}}$, where $(G^\nu)^{\perp,\langle \cdot, \cdot\rangle_{b,\nu}}$ denotes the orthogonal complement of $G^\nu$ with respect to $\langle \cdot, \cdot \rangle_{b,\nu}$.
\begin{prop}\label{prop:Michael-grad}
	Let $E$ be as in \eqref{def:E} and $\nu(dx) = v(x)dx \in D_0$. Then $\nabla^\Pscr_b E_\nu \in G^\nu$.
\end{prop}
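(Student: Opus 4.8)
The plan is to show that $\nabla^\Pscr_b E_\nu$, which by Theorem \ref{thm1} equals $b(v)\nabla\big(g(v)+\Phi\big)$ in $L^2(\R^d,\R^d;\nu)$, lies in the closed subspace $G^\nu$ generated by the weighted smooth gradients $b(v)\nabla\zeta$, $\zeta \in C^\infty_c(\R^d)$. The natural strategy is to produce an approximating sequence $\zeta_n \in C^\infty_c(\R^d)$ such that $b(v)\nabla\zeta_n \to b(v)\nabla(g(v)+\Phi)$ in $L^2(\R^d,\R^d;\nu)$. Since the weight $\frac{1}{b(v)}$ is bounded above and below (Hypothesis 1(ii)), and $b \in C_b(\R)$, convergence in $\langle\cdot,\cdot\rangle_{b,\nu}$ is equivalent to convergence in the standard $\langle\cdot,\cdot\rangle_\nu$, so it suffices to approximate $\nabla(g(v)+\Phi)$ by $\nabla\zeta_n$ in $L^2(\R^d,\R^d;\nu)$, i.e. to find $\zeta_n$ with $\int_{\R^d}|\nabla\zeta_n - \nabla(g(v)+\Phi)|^2\,v\,dx \to 0$.

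First I would record that $\psi := g(v)+\Phi$ has a weak gradient satisfying $\nabla\psi = g'(v)\nabla v + \nabla\Phi = \frac{\beta'(v)}{vb(v)}\nabla v - D \in L^2(\R^d,\R^d;\nu)$: the first summand is in $L^2(\R^d,\R^d;\nu)$ precisely by the defining condition of $D'_{\text{nice}}$ together with the bounds $\beta' \le \gamma_1$, $b \ge b_0$ (this is exactly what the proof of Theorem \ref{thm1} verifies), and $D = -\nabla\Phi$ is bounded, hence in $L^2(\R^d,\R^d;\nu)$ since $\nu$ is a probability measure. Next, a standard mollification-and-truncation argument: set $\psi_\varepsilon := \rho_\varepsilon * \psi$ (Friedrichs mollifier) and then $\zeta_{n} := \chi_n \psi_{\varepsilon_n}$ for a cutoff $\chi_n \in C^\infty_c$ with $\chi_n = 1$ on $B_n$, $|\nabla\chi_n|\le C/n$. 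One has $\zeta_n \in C^\infty_c(\R^d)$, and $\nabla\zeta_n = \chi_n\nabla\psi_{\varepsilon_n} + \psi_{\varepsilon_n}\nabla\chi_n$. The term $\chi_n\nabla\psi_{\varepsilon_n} \to \nabla\psi$ in $L^2(\R^d,\R^d;\nu)$ follows because $\nabla\psi_\varepsilon = \rho_\varepsilon * \nabla\psi \to \nabla\psi$ in $L^2_{\text{loc}}(dx)$ and, with $v$ bounded, one can upgrade $L^2_{\text{loc}}(dx)$-convergence on the support of $\chi_n$ to $L^2(\nu)$-convergence by dominated convergence, using $\int_{\R^d}|\nabla\psi|^2 v\,dx < \infty$ to control the tails where $\chi_n \ne 1$. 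The cutoff error $\psi_{\varepsilon_n}\nabla\chi_n$ must be shown to vanish in $L^2(\R^d,\R^d;\nu)$; this is where growth of $\psi$ at infinity enters.

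I expect the main obstacle to be precisely controlling this cutoff term $\int_{\R^d}|\psi_{\varepsilon_n}|^2|\nabla\chi_n|^2 v\,dx \le \frac{C}{n^2}\int_{n \le |x| \le 2n}|\psi|^2 v\,dx$ (after passing the mollification, which only perturbs things mildly), because $\psi = g(v)+\Phi$ and $\Phi \to \infty$ at infinity, while $g(v)$ could also be large where $v$ is small. The resolution should exploit that $\nu \in D(E) \subseteq \tilde D(E)$, so $\int \Phi\,v\,dx < \infty$, hence $\int_{|x|\ge n}\Phi v\,dx \to 0$; combined with $\sup_{n\le|x|\le 2n}\Phi \lesssim$ (growth rate) and the integrability of $v$, one bounds the $\Phi$-part. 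For the $g(v)$-part one uses the two-sided bound on $\eta$ from Hypothesis 1 relating $\eta(v)$ to $v(\log v - 1)$, so that $v \in D(E)$ forces $v\log v \in L^1$, which controls $\int |g(v)|^2 v\,dx$ on the region where $v \ge 1$; on $\{v < 1\}$, $|g(v)| = |\int_1^v \frac{\beta'(w)}{wb(w)}dw| \le \frac{\gamma_1}{b_0}|\log v|$, and $v(\log v)^2$ is integrable near $v = 0$ over a probability measure when $v\log v \in L^1$ only if one has a little room — so one may instead need to truncate $g$ as well, replacing $g$ by $g_k := (-k)\vee g \wedge k$, approximate $v(x)dx$ by $\frac{(v\wedge c_k)}{\|v\wedge c_k\|_1}dx$ or simply split the domain, and take a diagonal sequence. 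A cleaner route, avoiding delicate tail estimates, is to first approximate $\psi$ by $\psi \wedge k \vee (-k)$ (bounded, still in the right Sobolev class locally), reducing to the case of bounded $\psi$ where the cutoff term is trivially $O(1/n^2)\cdot\|v\|_{L^1(n\le|x|\le 2n)} \to 0$, and then let $k \to \infty$ using $\nabla(\psi\wedge k\vee(-k)) = \mathbf{1}_{\{|\psi|<k\}}\nabla\psi \to \nabla\psi$ in $L^2(\R^d,\R^d;\nu)$ by dominated convergence; a diagonal argument then finishes. I would present the argument in this last form, as it isolates the genuine content (density of smooth gradients + the fact that $\nabla\psi \in L^2(\nu)$) from the routine mollification bookkeeping.
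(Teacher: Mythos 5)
Your final ``cleaner route'' is correct in substance, but it is genuinely different from the paper's argument. Both proofs start identically: reduce to the unweighted inner product using $b_0\le b\le |b|_\infty$, and truncate $w:=g(v)+\Phi$ to $w_N:=(w\wedge N)\vee(-N)$, noting $\nabla w_N=\mathds{1}_{\{|w|\le N\}}\nabla w\to\nabla w$ in $L^2(\R^d,\R^d;\nu)$ by dominated convergence. The divergence is in how the bounded truncation $w_N$ is approximated by $C^\infty_c$ functions: the paper observes that $\nu\in D'_{\text{nice}}$ gives $\sqrt{v}\in H^1(\R^d)$ (since $\int|\nabla v|^2/v\,dx<\infty$) and then simply cites the density theorem \cite[Thm.3.1]{RZ94} for the weighted Sobolev space, whereas you carry out an explicit mollification-plus-cutoff argument. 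Your route is more self-contained and only uses $v\in L^\infty\cap W^{1,1}_{\rm loc}$; the paper's is shorter but leans on a nontrivial external result whose hypothesis $\sqrt v\in H^1$ is exactly what fails for the classical PME (which is why the paper's subsequent remark says the analogous statement is open in that case). Two points in your sketch need more care than you give them. First, $\psi=g(v)+\Phi$ equals $-\infty$ on $\{v=0\}$, which may have positive Lebesgue measure, so $\psi$ itself need not lie in any $W^{1,1}_{\rm loc}(dx)$; you must define the truncation at the level of the composition, i.e.\ check that $r\mapsto (g(r)+c)\wedge k\vee(-k)$ is Lipschitz on $[0,\infty)$ (it is, with constant depending on $k$ and $c$, because the truncation removes the region where $g'(r)\sim \gamma/(b r)$ blows up) and apply the chain rule to conclude $\psi_k\in W^{1,1}_{\rm loc}(dx)$ with $\nabla\psi_k=\mathds{1}_{\{|\psi|<k\}}\nabla\psi$. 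Second, for $\rho_\varepsilon*\nabla\psi_k\to\nabla\psi_k$ you need $\nabla\psi_k\in L^2_{\rm loc}(dx)$, not just $L^2(\nu)$; this does hold, because on $\{|\psi|<k\}\cap K$ the local boundedness of $\Phi$ forces $g(v)\ge -k-\sup_K\Phi$, hence $v$ is bounded below there and $\int_{K\cap\{|\psi|<k\}}|\nabla\psi|^2dx\lesssim\int|\nabla\psi|^2v\,dx<\infty$. With these repairs your argument closes; your earlier, non-truncated attempt to control the cutoff term directly via the growth of $\Phi$ and $g(v)$ would indeed not go through (neither $\int\Phi^2v\,dx$ nor $\int v(\log v)^2dx$ is controlled by membership in $D(E)$), and you were right to abandon it.
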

\begin{proof}
	We know by Theorem \ref{thm1} that $\nabla^\Pscr_b E_\nu = b(v)\nabla\big(g(v)+\Phi\big)$, with $\nabla\big(g(v)+\Phi\big) \in L^2(\R^d,\R^d;\nu)$. By Hypothesis 1 (i) we may assume $b \equiv 1$. Set $w:= g(v)+\Phi$ and $w_N:= (w\wedge N)\vee(-N)$, $N\in \N$. Then, as $N\to \infty$,
	\begin{equation*}
		\nabla w_N = \mathds{1}_{\{-N \leq w \leq N\}}\nabla w_N \xrightarrow{N \to \infty}\nabla w\text{ in }L^2(\R^d,\R^d;\nu).
	\end{equation*}
	Hence, since $\nu \in D'_{\text{nice}}$ (so, in particular, $\sqrt{v} \in H^1(\R^d)$), by \cite[Thm.3.1]{RZ94}, there exist $\zeta_n \in C^\infty_c(\R^d)$, $n \in \N$, such that 
	\begin{equation*}
		\zeta_n \xrightarrow{n \to \infty}w_N \text{ in }L^2(\R^d;\nu),
	\end{equation*}
	\begin{equation*}
		\nabla \zeta_n \xrightarrow{n \to \infty} \nabla w_N \text{ in }L^2(\R^d,\R^d;\nu),
	\end{equation*}
	and the assertion follows.
\end{proof}
\begin{kor}\label{cor:Michael}
	Let $D=b=0$, and $\mu_0$ and $t\mapsto \mu_t$ be as in Theorem \ref{prop:main-appl-general-case}. Then, for every $T>0$, \begin{equation*}
		\bigg(\nabla^\Pscr E_{\mu_t}\bigg)_{t \in [0,T]} \in \int_{[0,T]}^{\oplus} G^{\mu_t}\,dt,
	\end{equation*}
where the integral is meant in the sense of \cite{D69} (see also \cite{T79}). Moreover, for every $T>0$, the set of functions in $t \in [0,T]$
\begin{equation*}
	\bigg\{\psi \nabla^\Pscr F_{\mu_t}\big|\, \psi \in C^1([0,T],\R), \psi(T)=0,F\in \Fscr C^2_b\bigg\}
\end{equation*}
is dense in $\int_{[0,T]}^\oplus G^{\mu_t} \,dt$.
\end{kor}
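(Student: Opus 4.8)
The plan is to first give the field $t\mapsto G^{\mu_t}$ a measurable structure turning $\int_{[0,T]}^{\oplus}G^{\mu_t}\,dt$ into a Hilbert space in the sense of \cite{D69}, then to check that $(\nabla^\Pscr E_{\mu_t})_{t\in[0,T]}$ is a square-integrable measurable section of it, and finally to obtain the density statement by reducing it to the classical density of step vector fields in a direct integral. Recall that with $D=b=0$ the weighted tensor $\langle\cdot,\cdot\rangle_b$ coincides with the standard one, so $G^{\mu_t}$ is the closure of $\{\nabla\zeta\,|\,\zeta\in C^\infty_c(\R^d)\}$ in $L^2(\R^d,\R^d;\mu_t)$ and, by Theorem \ref{prop:main-appl-general-case}(i), $\mu_t\in D_0$ with $\nabla^\Pscr E_{\mu_t}=\frac{\nabla\beta(u(t))}{u(t)}$ for a.e.\ $t$. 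For the measurable structure I would fix a countable set $\mathcal D\subseteq C^\infty_c(\R^d)$ such that $\{\nabla\zeta\,|\,\zeta\in\mathcal D\}$ is $L^2(\R^d,\R^d;\mu_t)$-dense in $\{\nabla\zeta\,|\,\zeta\in C^\infty_c(\R^d)\}$ for \emph{every} $t\in[0,T]$; such a $\mathcal D$ exists because, for test functions supported in a fixed ball $B_R$, $\zeta\mapsto\nabla\zeta$ is continuous from the separable space $(C^\infty_c(B_R),\|\cdot\|_{C^1})$ into $L^2(\R^d,\R^d;\mu_t)$ uniformly in $t$ (each $\mu_t$ being a probability measure), so one countable $C^1$-dense family works simultaneously for all $t$. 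Since $t\mapsto\mu_t$ is weakly continuous, $t\mapsto\langle\nabla\zeta,\nabla\xi\rangle_{\mu_t}=\int_{\R^d}\nabla\zeta\cdot\nabla\xi\,d\mu_t$ is continuous for $\zeta,\xi\in\mathcal D$, and $\{\nabla\zeta\,|\,\zeta\in\mathcal D\}$ is total in $G^{\mu_t}$ for every $t$ by construction; hence $(G^{\mu_t})_{t}$ is a measurable field of Hilbert spaces with this fundamental family, and $\int_{[0,T]}^\oplus G^{\mu_t}\,dt$ is well defined.

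Next I would verify that $t\mapsto\nabla^\Pscr E_{\mu_t}$ is a measurable vector field. It lies in $G^{\mu_t}$ for a.e.\ $t$ by Proposition \ref{prop:Michael-grad}, and from the computation in the proof of Theorem \ref{thm1} one has $\langle\nabla^\Pscr E_{\mu_t},\nabla\zeta\rangle_{\mu_t}=\text{diff}E_{\mu_t}(\nabla\zeta)=\int_{\R^d}\big(\eta(u(t,x))-\eta'(u(t,x))u(t,x)\big)\Delta\zeta(x)\,dx$; since $(t,x)\mapsto u(t,x)$ is jointly measurable and bounded on $[0,T]\times\supp\zeta$ and $\eta,\eta'$ are continuous, Fubini's theorem shows $t\mapsto\langle\nabla^\Pscr E_{\mu_t},\nabla\zeta\rangle_{\mu_t}$ is measurable for each $\zeta\in\mathcal D$. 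Combined with $\int_0^T|\nabla^\Pscr E_{\mu_t}|^2_{T_{\mu_t}\Pscr}\,dt<\infty$ (Theorem \ref{prop:main-appl-general-case}(iii)), this gives $(\nabla^\Pscr E_{\mu_t})_{t\in[0,T]}\in\int_{[0,T]}^\oplus G^{\mu_t}\,dt$, which is the first assertion.

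For the density assertion, by the construction of the direct integral together with the totality of $\{\nabla\zeta\,|\,\zeta\in\mathcal D\}$ in each fibre, the linear span of the step fields $t\mapsto\mathds 1_A(t)\nabla\zeta$, $A\subseteq[0,T]$ Borel, $\zeta\in\mathcal D$, is dense in $\int_{[0,T]}^\oplus G^{\mu_t}\,dt$ (see also \cite{D69,T79}). Approximating each $\mathds 1_A$ in $L^2([0,T])$ by functions $\psi\in C^1([0,T])$ with $\psi(T)=0$ (and using $\|\nabla\zeta\|_{\mu_t}\le\|\nabla\zeta\|_\infty$ uniformly in $t$) upgrades this to density of the linear span of $\{t\mapsto\psi(t)\nabla\zeta\,|\,\psi\in C^1([0,T]),\ \psi(T)=0,\ \zeta\in C^\infty_c(\R^d)\}$. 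It then remains to realize each such field as $t\mapsto\psi(t)\nabla^\Pscr F_{\mu_t}$: given $\zeta$, the map $t\mapsto\mu_t(\zeta)$ is continuous on $[0,T]$, hence has compact range $K_\zeta\subseteq\R$; picking $f\in C^1_b(\R)$ with $f'\equiv1$ on a neighbourhood of $K_\zeta$ and setting $F(\nu):=f(\nu(\zeta))\in\Fscr C^2_b$ gives $\nabla^\Pscr F_{\mu_t}=f'(\mu_t(\zeta))\nabla\zeta=\nabla\zeta$ for all $t\in[0,T]$. Thus the closed linear span of the displayed set equals $\int_{[0,T]}^\oplus G^{\mu_t}\,dt$, which is the asserted density.

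The step I expect to be the main obstacle is the construction of the measurable-field structure on $(G^{\mu_t})_t$ compatible with the varying spaces $L^2(\R^d,\R^d;\mu_t)$ --- in particular producing a single countable family of smooth gradient fields that is total in every fibre $G^{\mu_t}$ simultaneously --- together with the (routine, but somewhat technical) joint measurability of $(t,x)\mapsto u(t,x)$ needed to make $t\mapsto\nabla^\Pscr E_{\mu_t}$ a measurable section. Once the direct-integral framework is set up, everything else is bookkeeping plus the standard density of step vector fields.
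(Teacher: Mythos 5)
Your proof is correct and follows the same route the paper intends: the paper's own proof is a one-line citation of Theorem \ref{prop:main-appl-general-case}(iii) (square-integrability of $t\mapsto\nabla^\Pscr E_{\mu_t}$) and Proposition \ref{prop:Michael-grad} (fibrewise membership in $G^{\mu_t}$), which are exactly the two key inputs you use, and the direct-integral bookkeeping you supply --- the simultaneous fundamental family $\mathcal D$, measurability of the section via the formula for $\text{diff}E_{\mu_t}(\nabla\zeta)$, density of step fields, and the $C^1_b$-cutoff trick realizing $\nabla\zeta$ as $\nabla^\Pscr F_{\mu_t}$ --- is precisely the standard material the paper leaves implicit by calling the corollary ``immediate''. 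The only caveat is that, as your argument implicitly assumes, ``dense'' in the second assertion must be read as ``total'' (density of the linear span), since the displayed set is not itself a linear space.
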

\begin{proof}
	This is immediate from Theorem \ref{prop:main-appl-general-case} (iii) and Proposition \ref{prop:Michael-grad}.
\end{proof}
\begin{rem}
	\begin{enumerate}
		\item [(i)] We do not know whether in the case of \eqref{intro:classPME} we have $\nabla^\Pscr E_\nu \in G^\nu$ for all $\nu(dx) = v(x)dx \in D_{\text{nice}}$. The reason is that in this case we do not have $\sqrt{v} \in H^1(\R^d)$, so  \cite[Thm.3.1]{RZ94} does not apply.
		\item [(ii)] Corollary \ref{cor:Michael} is the first step to prove that for $0\leq s \leq t$
		\begin{equation}
			E(\mu_t) - E(\mu_s) = \int_s^t |\nabla^\Pscr E_{\mu_r}|^2_{\mu_r}\,dr,
		\end{equation}
	as in the finite-dimensional case and as purely (!) heuristic computations suggest, though, of course, by \eqref{gradflow-P} we have
	\begin{equation*}
		F(\mu_t)-F(\mu_s) = \int_s^t \langle \nabla^\Pscr_b E_{\mu_r},\nabla^\Pscr_b F_{\mu_r}\rangle_{b,\mu_r}\,dr, \quad \forall F\in \Fscr C^2_b.
	\end{equation*}
But, so far, we cannot show that this equality goes over to $E$ replacing $F$.
		\end{enumerate}
\end{rem}
\subsection{More general divergence-type equations with $x$-dependent drift}\label{sect:matrix-case}
Theorem \ref{thm1} can be extended to the more general divergence-type equation
\begin{equation}\label{eq:matrix-div-type-FPKE}
	\partial_t u = \divv \big(A(u)\nabla u\big)  -\divv \big( B(u)D(x)u\big),
\end{equation}
where $A, B: \R \to \R^{d\times d}$ are matrix-valued. More precisely, consider the following assumptions.
\\

\noindent \textbf{Hypothesis 2.}
\begin{itemize}
	\item[(i)] $B \in L^\infty(\R,\R^{d\times d})$, $|B| \geq b_0>0$, $B(r)$ is invertible for all $r \in \R_+$, and $B(r)^{-1} A(r) = \Psi(r)\Id$, where $\Psi:\R \to \R$, $\Psi \in C(\R)$ and $c_0 \leq \Psi \leq c_1$ for $c_i >0, i \in \{1,2\}$. 
	\item[(ii)] $D$ satisfies (iii) of Hypothesis 1.
\end{itemize} 
The proof of the following result is analogous to the proof of Theorem \ref{thm1}. Here, we define
$$D''_{\text{nice}} := \bigg\{\nu(dx) = v(x)dx \in \Pscr \,\big|\, v \in L^\infty(\R^d)\cap W^{1,1}_{\text{loc}}(\R^d), \frac{A(v)\nabla v}{ v} \in L^2(\R^d,\R^d;\nu)\bigg\}.$$
\begin{prop}\label{prop:matrix-case}
	Let Hypothesis 2 be fulfilled. Then any solution $t \mapsto \mu_t(dx) = u(t,x)dx$ to \eqref{eq:matrix-div-type-FPKE} such that $u(t)\in L^\infty(\R^d)\cap W^{1,1}_{\text{loc}}(\R^d)$, $\frac{A(u(t,\cdot))}{u(t,\cdot)}\nabla u(t,\cdot) \in L^2(\R^d,\R^d;\mu_t)$ and $\mu_t \in D(E)$ $dt$-a.s. is a solution to the gradient flow on $\Pscr$ with $E$,
	\begin{equation*}
		E: D(E) \subseteq \Pscr_a \to \R, \quad E(udx) := \int_{\R^d} \eta (u(x)) dx - \int_{\R^d}\Phi(x) u(x)dx,
	\end{equation*}
$$D(E):= \{\nu \in \Pscr_a\,|\, \nu(dx) = v(x)dx, \Phi \in L^1(\R^d;\nu), v \log v \in L^1(\R^d)\},$$
where \begin{equation}
	\eta(r):= \int_0^r \int_1^s \frac{\Psi(w)}{w}dw \,ds, \quad r \in \R,
\end{equation}
and with metric tensor 
\begin{equation}
\langle \cdot, \cdot \rangle_B : \nu \mapsto \langle\cdot,\cdot \rangle_{B,\nu} := \langle B(v)^{-1}\cdot, \cdot \rangle_{\nu}.
\end{equation}
The weighted gradient for $\nu(dx) = v(x)dx$ from $D''_{\text{nice}}\cap D(E)$
is
\begin{equation}
	\nabla^\Pscr_B E_\nu = \frac{A(v)\nabla v}{v}-B(v)D
\end{equation}
in $\big(L^2(\R^d,\R^d;\nu),\langle \cdot, \cdot \rangle_{B,\nu}\big)$.
\end{prop}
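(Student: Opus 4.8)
The plan is to run the argument of the proof of Theorem~\ref{thm1} line by line, with the scalar pair $(\beta'/b,b)$ replaced by $(\Psi,B)$ and the divergence theorem applied componentwise; since the existence of a solution is assumed and no uniqueness is claimed, only the differential-geometric identification has to be performed. First I would fix a solution $t\mapsto\mu_t(dx)=u(t,x)dx$ with the stated regularity and record, directly from the distributional formulation of \eqref{eq:matrix-div-type-FPKE} (one integration by parts in each divergence term), that for $dt$-a.e.\ $t>0$, with exceptional set independent of $\zeta\in C^2_c(\R^d)$,
\[
	\ddtau\mu_\tau(\zeta)_{|\tau=t}=-\int_{\R^d}\nabla\zeta(x)\cdot\Bigl(\tfrac{A(u(t,x))\nabla u(t,x)}{u(t,x)}-B(u(t,x))D(x)\Bigr)\,d\mu_t(x).
\]
By the reformulation \eqref{gradflow-P-single-h} of \eqref{gradflow-P}, used with the matrix weight $\alpha(\nu)=B(v)^{-1}$ (so that $\alpha(\mu_t)^{-1}\nabla\zeta=B(u(t,\cdot))\nabla\zeta$), it then suffices to prove $D''_{\text{nice}}\cap D(E)\subseteq D(\text{diff}E)$ together with the formula for $\text{diff}E_\nu$ on $C^1_c(\R^d,\R^d)$ that, under the weighted Riesz isomorphism for $\langle\cdot,\cdot\rangle_B$ and the identity $B(v)^{-1}A(v)=\Psi(v)\Id$ of Hypothesis~2(i), produces $\nabla^\Pscr_B E_\nu=\tfrac{A(v)\nabla v}{v}-B(v)D$; the remaining requirement $\mu_t\in D''_{\text{nice}}\cap D(E)$ for $dt$-a.e.\ $t$ is part of the hypothesis on $u$.

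For the computation of $\text{diff}E_\nu$ I would proceed exactly as in Theorem~\ref{thm1}: fix $\nu(dx)=v(x)dx\in D''_{\text{nice}}\cap D(E)$ and $\psi\in C^1_c(\R^d,\R^d)$, and consider $\mu^{\psi,\nu}_\tau=\nu\circ(\Id+\tau\psi)^{-1}$ with density \eqref{eq:density-pushforward}. One first checks $\mu^{\psi,\nu}_\tau\in D(E)$ for $|\tau|$ small: the potential term is controlled by $\Phi\in L^1_{\text{loc}}(\R^d)$, $v\in L^\infty(\R^d)$ and Lemma~\ref{lem:det-calc}, while the two-sided bound $c_0\le\Psi\le c_1$ makes $\eta$ comparable to $r(\log r-1)$ (playing the role that (i),(ii) of Hypothesis~1 played in Theorem~\ref{thm1}), so that the Jacobian staying in an interval $(1-\delta,1+\delta)$ keeps the entropy integrable. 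Then one shows $\tau\mapsto E(\mu^{\psi,\nu}_\tau)$ is differentiable at $\tau=0$, differentiation under the integral being justified by Lemma~\ref{lem:det-calc}, $v\in L^\infty(\R^d)$ and the local boundedness of $\eta,\eta'$; evaluating the $\tau$-derivative at $0$ with Lemma~\ref{lem:det-calc}(i),(ii) gives
\[
	\text{diff}E_\nu(\psi)=\int_{\R^d}\bigl(\eta(v(x))-\eta'(v(x))v(x)\bigr)\divv\psi(x)\,dx\;+\;\int_{\R^d}\nabla\Phi(x)\cdot\psi(x)\,d\nu(x),
\]
the second summand arising from $\bigl(\ddtau\int_{\R^d}\Phi(x+\tau\psi(x))\,d\nu(x)\bigr)_{|\tau=0}$ exactly as in Theorem~\ref{thm1} and contributing $-\int_{\R^d}D\cdot\psi\,d\nu$.

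Finally, since $\bigl(\eta(r)-\eta'(r)r\bigr)'=-\eta''(r)r=-\Psi(r)$ and $v\in W^{1,1}_{\text{loc}}(\R^d)$, the divergence theorem rewrites the first summand as $\int_{\R^d}\Psi(v(x))\nabla v(x)\cdot\psi(x)\,dx=\int_{\R^d}\Psi(v(x))\tfrac{\nabla v(x)}{v(x)}\cdot\psi(x)\,d\nu(x)$; because $\Psi(v)\Id=B(v)^{-1}A(v)$ and $\tfrac{A(v)\nabla v}{v}\in L^2(\R^d,\R^d;\nu)$ (from $\nu\in D''_{\text{nice}}$ together with the invertibility and boundedness in Hypothesis~2(i)), and since $\nabla\Phi\in C_b(\R^d,\R^d)$, the map $\psi\mapsto\text{diff}E_\nu(\psi)$ is linear and $L^2(\R^d,\R^d;\nu)$-continuous, so $\nu\in D(\text{diff}E)$ and $\text{diff}E_\nu$ extends to all of $L^2(\R^d,\R^d;\nu)$. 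Identifying the gradient with respect to $\langle\cdot,\cdot\rangle_{B,\nu}=\langle B(v)^{-1}\cdot,\cdot\rangle_\nu$ --- i.e.\ applying $B(v)$, which turns $\Psi(v)\tfrac{\nabla v}{v}$ into $\tfrac{A(v)\nabla v}{v}$ --- yields $\nabla^\Pscr_B E_\nu=\tfrac{A(v)\nabla v}{v}-B(v)D$, and substituting $\psi=B(u(t,\cdot))\nabla\zeta$ into the formula for $\text{diff}E_\nu$ reproduces the first display, which by \eqref{gradflow-P-single-h} is precisely \eqref{gradflow-P}. I expect the only genuinely delicate points to be the two dominated-convergence steps that pin $D''_{\text{nice}}\cap D(E)$ down inside $D(\text{diff}E)$ --- the rest being bookkeeping with Lemma~\ref{lem:det-calc} and the matrices $A,B,\Psi$ --- together with the fact that, since $B$ is not assumed symmetric, $\langle\cdot,\cdot\rangle_B$ has to be read as the pairing $\langle B(v)^{-1}\cdot,\cdot\rangle_\nu$ entering the definition of $\nabla^\Pscr_B$ (or one should additionally require each $B(r)$ symmetric).
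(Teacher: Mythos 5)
Your proof is correct and is exactly the argument the paper intends: the paper offers no proof of Proposition~\ref{prop:matrix-case} beyond the one-line remark that it is ``analogous to the proof of Theorem~\ref{thm1}'', and your write-up carries out that analogy faithfully — the pushforward-density computation with Lemma~\ref{lem:det-calc}, the two-sided bound $c_0\le\Psi\le c_1$ replacing (i)',(ii) of Hypothesis~1 to keep $\mu^{\psi,\nu}_\tau\in D(E)$, the identity $(\eta(r)-\eta'(r)r)'=-\Psi(r)$ followed by the divergence theorem, and the weighted Riesz identification via $B(v)^{-1}A(v)=\Psi(v)\Id$, together with the legitimate observation that $B(r)$ should be symmetric (or the pairing read one-sidedly) for $\langle B(v)^{-1}\cdot,\cdot\rangle_\nu$ to be a genuine inner product. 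Note only that your computation implicitly uses the sign convention $E(v\,dx)=\int\eta(v)\,dx+\int\Phi v\,dx$ of Theorem~\ref{thm1} (the one consistent with $D=-\nabla\Phi$ and with the asserted gradient $\tfrac{A(v)\nabla v}{v}-B(v)D$), whereas the proposition's displayed $E$ carries a minus sign in front of the potential term — a typo in the statement rather than a gap in your argument.
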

Existence of solutions to \eqref{eq:matrix-div-type-FPKE} with the properties from Proposition \ref{prop:matrix-case} seems an open question. In the simpler case that $D$ is independent of $x$, well-posedness of entropic solutions $u$ to \eqref{eq:matrix-div-type-FPKE} with $u(t) \in L^\infty(\R^d)\cap W^{1,1}_{\textup{loc}}$ has, e.g., been obtained in \cite{CP03}, but we do not know of a general result ensuring the additional assumptions of Proposition\ref{prop:matrix-case}.
	
\section*{Appendix \quad Differential geometry on $\Pscr$ via natural charts}\label{appendix}

In order to further convince the reader that the differential geometry from Section \ref{subsect:diff-geom} is natural, here we present its deduction from rigorous differential geometric principles. More precisely, starting with a natural chart on $\Pscr$, we obtain our geometry in analogy to classical Riemannian case. Since we consider $\Pscr$ as an infinite-dimensional manifold with the topology of weak convergence of probability measures, i.e. the initial topology of $\nu \mapsto \nu(\zeta)$, $ \zeta \in C^\infty_c(\R^d)$, a natural global chart  $\pi: \Pscr \to \R^\infty$ is
\begin{equation*}\label{eq:chart-pi}
	\pi: \Pscr \to \pi(\Pscr) \subseteq \R^\infty, \pi(\nu):= (\nu(g_i))_{i\in \N},
\end{equation*} 
where $\{g_i, i\in \N\} \subseteq C^\infty_c(\R^d)$ is dense.
The coordinate maps are $\pi^{(i)}(\nu):= \nu(g_i)$, $i \in \N$, and we also set 
$\pi_k(\nu):= (\nu(g_1),\dots,\nu(g_k)).$ There is no natural choice for $\{g_i, i \in \N\}$, i.e. the coordinates $\pi^{(i)}$ do not have intrinsic geometric meaning, and ultimately we will not rely on such a choice.
\paragraph{Test functions.}
The natural test functions on $(\Pscr,\pi)$ are cylindrical $C^1_b$-maps $f\circ \pi_k$, $k \in \N$, $f \in C^1_b(\R^k)$, i.e. we are led to consider the class
\begin{equation*}
	\tilde{\Fscr} C^1_b:= \big\{F: \Pscr \to \R\,|\,F(\nu) = f\big(\nu(g_1),\dots,\nu(g_k)\big), k \geq 1, g_i \text{ as above }, f \in C_b^1(\R^k)\big\}.
\end{equation*}
However, since the choice of $\{g_i\}_{i \geq 1}$ was arbitrary, it is reasonable to replace $g_i$ within the definition of $\tilde{\Fscr}C^1_b$ by arbitrary $h_i \in C^\infty_c(\R^d)$. Since our differential structure is of at most second order, we even allow $h_i \in C^2_c(\R^d)$, which leads to the test function class $\Fscr C^2_b$, see \eqref{test-fcts}.
\paragraph{Tangent spaces.}

In conjunction with the test function class $\tilde{\Fscr}C^1_b$, a curve $(-\varepsilon,\varepsilon)\ni t\mapsto \mu_t$ on $(\Pscr,\pi)$ is differentiable, if each $t\mapsto \pi^{(i)} \circ \mu_t$, $i \in \N$, is differentiable. For a differentiable curve with $\mu_0 = \nu$ and for $F \in \tilde{\Fscr}C^1_b$, set 
\begin{equation*}
	\frac{d}{dt}{\mu_t}_{|t =0}(F) := \ddt F(\mu_t)_{|t=0} = (\nabla f)(\nu(g_1),\dots,\nu(g_k))\cdot \ddt (\pi_k \circ \mu_t)_{|t=0},
\end{equation*}
and note
\begin{equation*}
	\ddt {\mu_t}_{|t = 0}(FG) = F(\nu)\ddt {\mu_t}_{|t = 0}(G)+G(\nu)\ddt {\mu_t}_{|t = 0}(F),\quad \forall F,G \in \tilde{\Fscr} C^1_b,
\end{equation*}
i.e. $\ddt {\mu_t}_{|t = 0}$ is a derivation. The equivalence class of differentiable curves $\tilde{\mu}$ with $\ddt( \pi_k \circ \tilde{\mu}_t) _{|t=0}= \ddt( \pi_k \circ \mu_t)_{|t=0}$ for all $k \in \N$ is denoted by $[\ddt \mu_t]$, and the tangent space $\tilde{T}_\nu\Pscr$ at $\nu$ is the set of all such equivalence classes. As usual, the associated tangent bundle is  $\tilde{T}\Pscr := \bigsqcup_{\mu \in \Pscr}\tilde{T}_\mu\Pscr$. In general, it is hard to further characterize elements of $\tilde{T}_\nu\Pscr$, and it is not clear whether these spaces are Hilbert.

Therefore, in order to obtain Hilbert tangent spaces, we restrict to suitable sub-tangent spaces $T_\nu\Pscr \subseteq \tilde{T}_{\nu}\Pscr$ as follows. For $\nu \in \Pscr$ and $\varphi \in L^2(\R^d,\R^d;\nu)$, the curve 
\begin{equation*}\label{curves-specific}
	(-1,1)\ni t\mapsto \mu^{\varphi,\nu}_t := \nu \circ(\Id+t\varphi)^{-1}
\end{equation*}
is differentiable with $\mu^{\varphi, \nu}_0 = \nu$, and 
\begin{equation}\label{calc-deriv-specific-curves}\tag{A.1}
	\ddt( \pi^{(i)}\circ \mu^{\varphi,\nu}_t) = \ddt\int_{\R^d}g_i(\Id+t\varphi)\,d\nu  = \int_{\R^d} \nabla g_i(\Id+t\varphi)\cdot \varphi \,d\nu.
\end{equation}
We set 
\begin{equation*}
	T_\nu\Pscr := \bigg\{\ddt\mu_t^{\varphi,\nu}, \varphi \in L^2(\R^d,\R^d;\nu)\bigg\},
\end{equation*}
and thus $T_\nu \Pscr \subseteq \tilde{T}_\nu \Pscr$ in the sense that $\big[\ddt\mu_t^{\varphi,\nu}\big] \in \tilde{T}_\mu\Pscr$ for all $\varphi \in L^2(\R^d,\R^d;\nu)$. $T_\nu\Pscr$ is isomorphic to $L^2(\R^d,\R^d;\nu)$, thus we identify $T_\nu \Pscr$ and $L^2(\R^d,\R^d;\nu)$ and endow $T_\nu \Pscr$ with the standard $L^2(\R^d,\R^d;\nu)$-inner product $\langle \cdot, \cdot \rangle_\nu$. Since \eqref{calc-deriv-specific-curves} still holds when one replaces $\pi^{(i)} \circ \mu^{\varphi,\nu}_t$ by $\mu^{\varphi, \nu}_t(h)$ for any $h \in C^2_c(\R^d)$, elements in $T_\nu\Pscr$ act as derivations not only on $\tilde{\Fscr}C^1$, but on $\Fscr C^2_b$. Hence we obtain the tangent spaces $T_\nu \Pscr$, tangent bundle $T\Pscr$ and metric tensor $\langle \cdot, \cdot \rangle: \nu \mapsto \langle \cdot, \cdot \rangle_\nu$ from \eqref{tangent-spaces-P}-\eqref{metric-tensor-P}.

\paragraph{Differential and Gradient.}
Following Riemannian differential geometry, for $\nu \in \Pscr$ and $F \in \tilde{\Fscr} C^1$, the differential of $F$ at $\nu$, $\tilde{\text{diff}}F_\nu: \tilde{T}_\nu \Pscr\to \R$, is a cotangent element with action on $[\ddt \mu_t] \in \tilde{T}_\nu\Pscr$ via
\begin{equation*}\label{diff-1}
	\tilde{\text{diff}}F_\nu\bigg(\big[\ddt \mu_t\big]\bigg) := \ddt \mu_t (F) = \ddt F(\mu_t)_{|t=0}.
\end{equation*}

For $\varphi \in T_\mu\Pscr \subseteq \tilde{T}_\mu\Pscr$ and $F(\nu) = f(\nu(g_1),\dots,\nu(g_k))$, we have
\begin{align*}\label{calc-diffF}
	\tilde{\text{diff}}F_\nu(\varphi) = \ddt F(\mu^{\varphi,\nu}_t)_{|t=0} = \sum_{i=1}^k\partial_i f(\nu(g_1),\dots,\nu(g_k))\langle \nabla g_i, \varphi \rangle_{\nu},
\end{align*}
and the same formula holds for $F \in \Fscr C^2_b$, i.e. when the coordinates $g_1, \dots, g_k$ are replaced by arbitrary $h_1,\dots,h_k \in C^2_c(\R^d)$. Hence we define $\text{diff}F_\nu$ for every $(\nu,F) \in \Pscr \times \Fscr C^2_b$ as in \eqref{diff-P}. Since our sub-tangent spaces $T_\mu \Pscr$ are Hilbert, there is a natural notion of gradient: For $F \in \Fscr C^2_b$, its gradient $\nabla^\Pscr F: \nu \mapsto \nabla^\Pscr F_\nu \in T_\nu\Pscr$ is the unique section in the tangent bundle $T\Pscr$ such that
\begin{equation*}
	\big\langle \nabla^\Pscr F_\nu,\varphi \big\rangle_{\nu} = \text{diff}F_\nu(\varphi),\quad \forall\varphi \in L^2(\R^d,\R^d;\nu),
\end{equation*}
which is the gradient from \eqref{grad-P}-\eqref{id-grad-diff-P}.
This characterization of the gradient is, of course, based on the Hilbert space structure of $T_\nu\Pscr$, which is one main reason for restricting to these sub-tangent spaces.

\paragraph{Competing Interests.}
The authors have no relevant financial or non-financial interests to disclose.

	\paragraph{Acknowledgement.} Funded by the Deutsche Forschungsgemeinschaft (DFG, German Research
	Foundation) – Project-ID 317210226 – SFB 1283. We also would like to thank Benjamin Gess for a very useful discussion on this paper.

\end{document}